\newtheorem{theorem}{Theorem}
\newtheorem{lemma}{Lemma}
\newtheorem{example}{Example}
\newtheorem{corollary}{Corollary}
\newtheorem{question}{Question}
\newtheorem{conjecture}{Conjecture}
\newtheorem{prelem}{{\bf Theorem}}
\newenvironment{oldtheorem}{\begin{prelem}{\hspace{-0.5
em}{\bf}}}{\end{prelem}}
\newtheorem{preproof}{{\bf Proof.}}
\newenvironment{proof}[1]{\begin{preproof}{\rm #1}\hfill{\rule[-0.5mm]
                         {2mm}{2mm}}}{\end{preproof}}
\def\n#1{\vbox to 3mm{\vspace{1mm}\vfill \hbox to 2.0mm{\hfill
             $#1$\hfill} \vfill }}
\def\m#1#2{\raise 0.2ex\hbox{
    ${#1_{\bf \displaystyle #2}}$}}
\def\x#1{\raise 0.5ex\hbox{
    ${#1}$}}
\def\arraystretch{1.0}                 
\title{{\bf On the existence of $k$-homogeneous \\ Latin bitrades}}
\author{Behrooz Bagheri Gh. and
E. S. Mahmoodian}
\date{}
\begin{document}
\maketitle
\begin{center}
Department of Mathematical Sciences \\
Sharif University of Technology \\
P. O. Box 11155--9415,
Tehran, I. R. Iran
\end{center}
%
\begin{abstract}
Let $T$ be a partial Latin square and $L$ a Latin square such
that $T\subseteq L$. Then $T$ is called a {\sf Latin trade}, if
there exists a partial Latin square  $T^{*}$ such that $T^{*}\cap
T=\emptyset$ and $(L\backslash T)\cup T^{*}$ is a Latin square. We call
$T^{*}$ a {\sf disjoint mate} of $T$ and the pair $(T,T^{*})$ is called a
{\sf Latin bitrade}.
A Latin bitrade where empty rows and  columns are ignored,
 is called a
{\sf $k$--homogeneous Latin bitrade}, if in each row and each
column it contains exactly $k$ elements, and each element appears
exactly $k$ times. The number of filled cells in a Latin trade is
referred to as its {\sf volume}.

Following the earlier work on $k$--homogeneous Latin bitrades by
 Cavenagh, Donovan, and  Dr\'{a}pal (2003 and 2004) Bean,
 Bidkhori,  Khosravi, and
E. S. Mahmoodian (2005) we prove the following results.

All $k$--homogeneous Latin bitrades of
volume $km$ exist, for
\begin{itemize}
\item
all odd integers $k$ and $m \ge k$,
\item
all even integers $k>2$ and $m \ge \min\{k+u, \frac{3k}{2}\}$,
where $u$ is any odd integer which divides $k$,
\item
all $m\geq k$, where $3\le k\leq 37$.
\end{itemize}
\end{abstract}
{\bf Keywords}: Latin trades, homogeneous Latin bitrades, volume of Latin bitrades.

\section{Introduction}        

Two disjoint partial Latin squares $T$ and $T^{*}$ of the same order, with the same set of filled cells and satisfying the property that
corresponding rows (corresponding columns) contain the same entry values, form a {\sf Latin trade} and its disjoint mate. The pair $(T,T^{*})$ is called a
{\sf Latin bitrade}. In earlier papers the word ``Latin trade'' is used for ``Latin bitrade'', but we keep the word ``trade'' for each partial Latin square of a Latin bitrade.
The
study of Latin trades and combinatorial trades in general, has generated much interest in recent years. For a
survey on the topic see \cite{MR2041871},
 \cite{MR2048415},  and \cite{CavenaghMathSlovac}.

A Latin bitrade which is obtained from another one by deleting
its empty rows and empty columns, is called a {\sf
$k$--homogeneous Latin bitrade}, if in each row and each column it
contains exactly $k$ elements, and each element appears exactly
$k$ times. The number of filled cells in a Latin trade is referred to
as its {\sf volume}.
%
The following question is of interest.
\begin{question}
\label{existence}
For given $m$ and $k$,  $m \ge k$, does there exist a $k$--homogeneous Latin bitrade of volume $km$?
\end{question}
In the sequel we need some more notations and definitions.
Concepts not defined here may be found in~\cite{anderson90a}. We
can represent each Latin square as a set of $3$--tuples $L =
\{(i,j;k)\,|\,\mbox{element $k$ is located in position} \
(i,j)\}.$ A Latin bitrade $(T,T^{*})$ is said to be {\sf primary}
if whenever $(U,U^{*})$ is a Latin bitrade such that $U\subseteq
T$ and $U^{*}\subseteq T^{*}$, then $(T,T^{*})=(U,U^{*})$. A
Latin trade $T$ is said to be $\sf minimal$ if whenever
$(U,U^{*})$ is a Latin bitrade such that $U\subseteq T$, then
$T=U$. So if  $T$ is a minimal Latin trade in a  Latin bitrade
$(T,T^{*})$,  then $(T,T^{*})$  is a primary Latin bitrade. A
Latin bitrade of volume $4$ is called an {\sf intercalate}. In
Figure~\ref{2x2} an intercalate $(T,T^{*})$ is shown. The
elements of $T^{*}$ is written as subscripts in the same array as
$T$.
\def\arraystretch{1.1}
\begin{center}
\begin{tabular}
{|@{\hspace{1pt}}c@{\hspace{1pt}} |@{\hspace{1pt}}c@{\hspace{1pt}}
|@{\hspace{1pt}}c@{\hspace{1pt}} |@{\hspace{1pt}}c@{\hspace{1pt}}
|@{\hspace{1pt}}c@{\hspace{1pt}} |@{\hspace{1pt}}c@{\hspace{1pt}}
|} \hline
\m{1}{2}&\m{2}{1}\\\hline
\m{2}{1}&\m{1}{2}\\\hline
\end{tabular}
\end{center}
\begin{center}
\begin{figure}[ht]
\label{2x2} \vspace*{-7mm} \caption{An intercalate}
\end{figure}
\end{center}
\vspace*{-7mm}
We call a  Latin bitrade {\sf circulant} if it can be obtained from the elements of its first row, called
{\sf base row}, by permuting them diagonally.
See Figure~2.
%
\def\arraystretch{1.0}
\begin{center}
\begin{tabular}
{|@{\hspace{1pt}}c@{\hspace{1pt}} |@{\hspace{1pt}}c@{\hspace{1pt}}
|@{\hspace{1pt}}c@{\hspace{1pt}}|@{\hspace{1pt}}c@{\hspace{1pt}}
|@{\hspace{1pt}}c@{\hspace{1pt}}|} \hline
\m{2}{1}&\m{1}{3}&\m{3}{2}&\x.&\x.\\\hline
\x.&\m{3}{2}&\m{2}{4}&\m{4}{3}&\x.\\\hline
\x.&\x.&\m{4}{3}&\m{3}{5}&\m{5}{4}\\\hline
\m{1}{5}&\x.&\x.&\m{5}{4}&\m{4}{1}\\\hline
\m{5}{2}&\m{2}{1}&\x.&\x.&\m{1}{5}\\\hline
\end{tabular}
\hspace*{10mm}
\begin{tabular}
{|@{\hspace{1pt}}c@{\hspace{1pt}} |@{\hspace{1pt}}c@{\hspace{1pt}}
|@{\hspace{1pt}}c@{\hspace{1pt}}|@{\hspace{1pt}}c@{\hspace{1pt}}
|@{\hspace{5pt}}c@{\hspace{5pt}}|} \hline
\m{2}{1}&\m{1}{3}&\m{3}{2}&\x.&\x.\\\hline
\x.&$\searrow$&$\searrow$&$\searrow$&\x.\\\hline
\x.&\x.&\ &\ &\ \\\hline
\ &\x.&\x.&\ &\ \\\hline
\ &\ &\x.&\x.&\ \\\hline
\end{tabular}
\\\vspace*{-.01mm}
\end{center}
\begin{center}
\begin{figure}[ht]
\label{circulant} \vspace*{-6mm} \caption{A circulant
$3$--homogeneous Latin bitrade  of volume $15$ and  its base row}
\end{figure}
\end{center}
\begin{example}\label{k=4}
The following is a base row of a circulant $4$--homogeneous Latin
bitrade of volume $4m$ for $m > 4$:
$$D_m^{4}=\{(3,2)_1,(1,4)_2,(4,1)_3,(2,3)_4\}.$$
Note that since in a base row of a circulant Latin
bitrade $T= (T_1, T_2 )$, all the elements are in the {\em first} row, we
use the notation $(i,j)_c$ for  $(1,c;i) \in T_1$ and
$(1,c;j)\in T_2$.
\end{example}
It is proved in \cite{MR2170114} that
$3$--homogeneous Latin bitrades of volume $3m$ exist for all $m\geq
3$, and in~\cite{MR2139816} they have discussed  minimal $4$--homogeneous Latin bitrades. In~\cite{MR2220235},
among other results it is shown that
the answer for Question~\ref{existence} is positive for all $m \ge k$, where $3 \le k \le 8$. While there is
an error in Theorem~6 of~\cite{MR2220235}, but the results are valid and we will explain  this in the
last section (Section 4.1). The following results from~\cite{MR2220235} will be used in this paper.

\begin{oldtheorem} {\rm (\cite{MR2220235})}.\label{sum}
If $\ell \neq 2, 6$ and for each $k\in\{k_1,\ldots,k_{\ell}\}$
there exists a $k$--homogeneous Latin bitrade of volume $kp$,
then a $(k_1+\cdots+k_{\ell})$--homogeneous Latin bitrade of
volume $(k_1+\cdots+k_{\ell}){\ell}p$ exists.  {\rm(}Some $k_i$s
can possibly be zero{\rm)}.
\end{oldtheorem}
\begin{oldtheorem}  {\rm (\cite{MR2220235})}.\label{k(k+1)}
For each $k>2$, a $k$--homogeneous  Latin bitrade of volume $k(k+1)$
exists.
\end{oldtheorem}
For the case of $k=2$ the following holds.
\begin{oldtheorem} {\rm (\cite{MR2220235})}.
\label{2hom} For any $m\geq 1$, there exists a $2$--homogeneous
Latin bitrade of volume $2m$ if and only if $m$ is an even
integer.
\end{oldtheorem}
\begin{oldtheorem} {\rm (\cite{MR2220235})}.
\label{5m} For any $m=5{\ell}$ and $3\leq k \leq m$, there exists
a $k$--homogeneous Latin bitrade of volume $km$.
\end{oldtheorem}
\begin{oldtheorem}\label{2k-1} {\rm (\cite{MR2220235})}.
Consider an arbitrary integer $k$. If  for any ${k+1}\leq m\leq
2k-1$ there exists a $k$--homogeneous Latin bitrade of volume
$km$, then for any $m \geq k$ there exists a $k$--homogeneous
Latin bitrade of volume $km$.
\end{oldtheorem}
Here we prove that for each given odd integer $k\ge 3$ and for $m
\ge k$,  all $k$--homogeneous Latin bitrades of volume $km$ exist
and for all even integers $k>2$ and $m \ge \min\{k+u,
\frac{3k}{2}\}$, where $u$ is any odd  integer which divides $k$,
all $k$--homogeneous Latin bitrades of volume $km$ exist. We also
show that for $3\le k\leq 37$ and $m\geq k$, $k$--homogeneous
Latin bitrades of volume $km$ exist.

\section{Constructions and  general results}
We discuss our constructions depending on the parity of $k$.
\subsection{$k$ is odd}
\begin{theorem}\label{odd}
A $k$--homogeneous Latin bitrade of volume $km$ exists for all
odd integers $k$ and $m \ge k\ge 3$.
\end{theorem}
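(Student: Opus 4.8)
Given the reduction in Theorem~E (the case $2k-1$), it suffices to construct, for each odd $k\ge 3$, a $k$-homogeneous Latin bitrade of volume $km$ for every $m$ in the range $k\le m\le 2k-1$; the extension to all $m\ge k$ then follows automatically. (The endpoint $m=k$ and $m=k+1$ are already handled: $m=k+1$ is Theorem~B, and $m=k$ can be obtained directly, e.g.\ from the circulant bitrade whose base row is a shift of $\{(c+1,c)_{\,*}\}$-type data, giving the ``cyclic'' $k$-homogeneous bitrade of volume $k^2$.) So the real work is the middle range $k+2\le m\le 2k-1$.

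**The construction I would use.** My first instinct is to stay inside the class of circulant bitrades, as in Example~\ref{k=4} and Figure~2: a circulant $k$-homogeneous bitrade of volume $km$ is completely specified by a base row, i.e.\ a choice of $k$ columns $c_1<\dots<c_k$ out of $\{1,\dots,m\}$ together with an ordered pair of symbols $(i_t,j_t)$ in cell $c_t$, and the homogeneity/Latin conditions become purely combinatorial conditions on these data modulo $m$. Concretely, writing the base row as $\{(a_t,b_t)_{c_t}\}_{t=1}^k$, the diagonal-shift structure forces: (i) in each row the $T$-entries are $\{a_1,\dots,a_k\}$ and the $T^*$-entries are $\{b_1,\dots,b_k\}$, so we need $\{a_t\}=\{b_t\}$ as multisets — and for a genuine bitrade $a_t\ne b_t$ for all $t$; (ii) each symbol, as we shift down the diagonals, must appear exactly $k$ times and meet each column exactly once, which translates into the differences $c_t-c_{t'}$ realizing each residue in a controlled way. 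I would look for a base row built from an arithmetic-type pattern: take symbols $1,\dots,k$, let $a_t=t$ and $b_t=t+1\pmod k$ (the ``cycle'' permutation on symbols), and then choose the columns $c_t$ so that the induced column shifts of each symbol tile $\mathbb{Z}_m$. For $m=k$ this is exactly the cyclic bitrade; for $k<m<2k$ one has $m-k$ empty columns to distribute, and the freedom is in where to place the ``gaps.'' I expect that a greedy/explicit placement of the $c_t$ — essentially $c_t=t$ for $t\le$ some threshold and $c_t=t+(m-k)$ afterward, with one carefully chosen jump — makes all symbols wrap around $\mathbb{Z}_m$ correctly.

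**Steps, in order.** First, set up the circulant formalism precisely and reduce the Latin-bitrade axioms to the combinatorial conditions on the base row (this is the bookkeeping step). Second, handle the extreme cases $m=k$ (cyclic bitrade) and $m=k+1$ (Theorem~B) to anchor the induction. Third — the heart — for each $k+2\le m\le 2k-1$ exhibit an explicit base row and verify the conditions; I would try the uniform recipe above and, if a single formula does not cover the whole range, split on the size of $m-k$ relative to $k$ (e.g.\ $m-k$ small vs.\ $m-k$ close to $k$), or on $\gcd(m-k,k)$ and the residue of $m$ mod small numbers. Fourth, invoke Theorem~E to conclude existence for all $m\ge k$. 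As a fallback if circulant base rows prove too rigid for some residues, I would instead combine smaller pieces via Theorem~A: writing $k=k_1+\dots+k_\ell$ with $\ell\ne 2,6$ and each $k_i$ odd (or zero), Theorem~A upgrades common-volume-$kp$ bitrades to a $k$-homogeneous bitrade of volume $k\ell p$; together with Theorem~D ($5\mid m$) and Theorem~B ($m=k+1$) this gives a lot of coverage, and one patches the remaining $m$'s by direct construction.

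**Main obstacle.** The genuinely hard part is the middle range $k+2\le m\le 2k-1$: here $m$ is too small for the ``glue many blocks'' theorems (Theorems~A, D typically produce volumes that are multiples of something $\ge 2k$, or need $5\mid m$) and too large to be the bare cyclic construction, so one is forced to produce bitrades essentially by hand, and the verification that a proposed circulant base row simultaneously satisfies all $k$ symbol-coverage conditions modulo $m$ is a delicate counting argument that may not be uniform in $k$ and $m$. I expect the proof to hinge on finding the right explicit base row (or the right decomposition of the column set into ``diagonal runs''), with the parity of $k$ — oddness — entering exactly to guarantee that the relevant permutation of symbols is a single $k$-cycle (or that certain $\gcd$s behave), which is why the odd and even cases are separated in the paper.
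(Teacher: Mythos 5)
Your general framework is the right one --- a circulant bitrade determined by a base row $\{(a_t,b_t)_{c_t}\}_{t=1}^{k}$, with the bitrade axioms reducing to ``$\{a_t\}=\{b_t\}$ as sets, $a_t\ne b_t$, and the difference multisets $\{a_t-c_t\}$ and $\{b_t-c_t\}$ coincide and consist of $k$ distinct residues mod $m$'' --- but the proposal stops exactly where the content of the theorem begins: no working base row is ever produced. Worse, the candidate you sketch fails. With $a_t=t$, $b_t\equiv t+1\pmod k$ and columns $c_t=t$ (even allowing one jump of size $m-k$), the differences $a_t-c_t$ take only one or two values, so a single column of $T$ receives the same symbol many times and the array is not even a partial Latin square; the real problem is to make the $k$ differences pairwise distinct mod $m$ while keeping the two difference multisets equal, and ``try a greedy placement, and split into cases on $m-k$ or $\gcd(m-k,k)$ if needed'' does not resolve it. The appeal to Theorem~\ref{2k-1} also does not save you, since the range $k+2\le m\le 2k-1$ that it leaves you with is precisely the part you acknowledge you cannot yet handle.

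The paper's proof needs none of this reduction or case analysis: it writes down one explicit base row, supported on the first $k=2\ell+1$ cells, that works simultaneously for every $m\ge k$. The trick is to interleave large and small symbols rather than place them monotonically: cell $2i-1$ carries the pair $(\ell+i,\,i)$ for $1\le i\le \ell+1$, and cell $2i$ carries $(i,\,\ell+1+i)$ for $1\le i\le \ell$. Both symbol sets are $\{1,\dots,k\}$, and the $T$-differences are $\{\ell-i+1 : 1\le i\le \ell+1\}\cup\{-i : 1\le i\le \ell\}=\{-\ell,\dots,\ell\}$, while the $T^{*}$-differences are $\{1-i\}\cup\{\ell+1-i\}=\{-\ell,\dots,\ell\}$ as well: these are $2\ell+1$ distinct integers lying in an interval of length $k$, hence distinct and equal as sets modulo \emph{any} $m\ge k$. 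This is where the oddness of $k$ enters (the symmetric interval $\{-\ell,\dots,\ell\}$ has odd cardinality $k$), and it is why the verification is uniform in $m$ --- the ``delicate counting argument that may not be uniform in $k$ and $m$'' you anticipate as the main obstacle simply does not arise. So the missing idea is not more machinery but the correct interleaved base row.
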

\begin{proof}{
Assume $k=2{\ell}+1$ and $m\geq k$. The following is a base row
of a circulant $k$--homogeneous Latin bitrade of volume $km$:
$$B_m^{2{\ell}+1}=
(\bigcup_{i=1}^{{\ell}+1}({\ell}+i,i)_{2i-1})\bigcup(\bigcup_{i=1}^{{\ell}}(i,{\ell}+1+i)_{2i}).$$

\vspace*{-1.1cm}}\end{proof}
\vspace*{0.1mm}
\begin{theorem}\label{ primary odd}
All constructed circulant $k$--homogeneous Latin bitrades in
Theorem~\ref{odd}, are primary.
\end{theorem}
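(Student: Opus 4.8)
The goal is to show that each circulant $k$-homogeneous Latin bitrade $(T,T^{*})$ constructed in Theorem~\ref{odd} is primary, i.e.\ contains no proper sub-bitrade. I would argue by contradiction: suppose $(U,U^{*})$ is a nonempty Latin bitrade with $U\subseteq T$ and $U^{*}\subseteq T^{*}$. The key structural feature to exploit is that $T$ is \emph{circulant}: its filled cells are exactly the diagonal translates of the base-row cells, so if $(1,c;a)\in T$ then $(r, c+r-1; a)\in T$ for every row $r$ (indices read modulo $m$), and similarly for $T^{*}$. Because every row and every column of $T$ has exactly $k$ filled cells, a sub-bitrade $U$ that is nonempty in some row must, by the bitrade row condition (each row of $U$ and $U^{*}$ carries the same multiset of symbols), be ``closed'' under the local row/column/symbol constraints within that row.

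The main step is to show that once a single cell of $T$ lies in $U$, the diagonal/circulant symmetry together with the bitrade closure forces $U$ to contain an entire row, and then all rows, hence $U=T$. Concretely, I would look at a row $r$ in which $U$ is nonempty, say $(r,j;a)\in U$. Since $(U,U^{*})$ is a bitrade, the symbol $a$ must reappear in row $r$ of $U^{*}$ in some cell $(r,j';a)$ with $j'\neq j$; tracing which base-row cells of $B_m^{2\ell+1}$ can produce symbol $a$ in $T$ and in $T^{*}$, the explicit description $B_m^{2\ell+1}=\big(\bigcup_{i=1}^{\ell+1}(\ell+i,i)_{2i-1}\big)\cup\big(\bigcup_{i=1}^{\ell}(i,\ell+1+i)_{2i}\big)$ pins down the column displacement between the two occurrences of each symbol. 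Chasing these displacements around the cyclic structure, I expect to show that the set of columns occupied by $U$ in any fixed row is forced to grow until it exhausts all $k$ columns of that row of $T$, and simultaneously (by the column condition of the bitrade, again using circulance) that $U$ must be nonempty in an adjacent row. A short induction on rows then gives $U=T$, so $(T,T^{*})$ is primary.

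I would organise the write-up as: (1) recall circulance and fix index notation mod $m$; (2) record, from the base row, for each symbol the pair of cells (one in $T$, one in $T^{*}$) carrying it in the base row, and hence the ``symbol-displacement'' data; (3) assume a proper sub-bitrade $U$ exists and pick a nonempty row; (4) use the row bitrade condition plus the displacement data to show that nonemptiness propagates to all cells of that row of $T$; (5) use the column bitrade condition plus circulance to propagate to all rows; conclude $U=T$.

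The hard part will be step (2)–(4): verifying that the particular offsets coming from $B_m^{2\ell+1}$ do not allow a ``small'' cyclically closed subset of cells within a row — in other words, that the permutation on symbols/columns induced by pairing each base-row cell of $T$ with its mate in $T^{*}$ acts without proper invariant subsets of the relevant kind. This is essentially a connectivity (single-orbit) check on an explicit permutation built from the formula for $B_m^{2\ell+1}$, and getting the modular arithmetic exactly right for both the even-indexed and odd-indexed families of base cells, uniformly in $\ell$ and in $m\ge k$, is where the real work lies. Everything else (circulance bookkeeping, the row-to-row propagation) is routine once that core claim is in place.
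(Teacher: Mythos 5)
Your plan is essentially the paper's proof: start from one occupied cell of $U$, use the row condition of the bitrade to chain through the cells of that row, check that this chain is a single cycle of length $k$, and then propagate to the remaining rows via the column condition and circulance. The one step you defer as ``the real work'' --- that the permutation pairing each base-row cell's $T$-symbol with the cell carrying that symbol in $T^{*}$ has a single orbit --- is immediate from the explicit form of $B_m^{2\ell+1}$: cell $2i-1$ holds $\ell+i$ in $T$ and $i$ in $T^{*}$, while cell $2i$ holds $i$ in $T$ and $\ell+1+i$ in $T^{*}$, so starting from cell $1$ the chain visits cells $1,2,3,\ldots,2\ell+1$ in order and closes up at cell $2\ell+1$, whose $T^{*}$-entry $\ell+1$ is the $T$-entry of cell $1$; this is a single $(2\ell+1)$-cycle. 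With that check written out, your argument coincides with the one in the paper.
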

\begin{proof}{Suppose $(T,T^{*})$ is the Latin bitrade constructed in the proof of
Theorem~\ref{odd}. Let $(U,U^{*})$ be a Latin bitrade such that
$U\subseteq T$ and $U^{*}\subseteq T^{*}$, we show that
$(U,U^{*})=(T,T^{*})$. Without loss of generality assume that
$(1,1;{\ell}+1) \in U$ and therefore $(1,1;1) \in U^{*}$. Since
$1$ must appear in the first row of $U$ and since $U \subseteq
T$, the only possibility is $(1,2;1) \in U$. Then we must have
$(1,2;{\ell}+2) \in U^{*}$. Similarly  $(1,3;{\ell}+2) \in U$,
thus $(1,3;2) \in U^{*}$. Following this process results that
$(1,2{\ell}+1;2{\ell}+1) \in U$,  and then
$(1,2{\ell}+1;{\ell}+1) \in U^{*}$. Therefore all the elements in
the first row of $T$ ($T^{*}$) are the same as all the elements
in the first row of $U$ ($U^{*}$). With the similar argument the
first column of $T$ ($T^{*}$) is the same as the first column of
$U$ ($U^{*}$). Finally this reasoning ends up showing that $U=T$
and $U^{*}=T^{*}$. }\end{proof}
%
%
%
%
\subsection{$k$ is even}
\begin{theorem}\label{even 3k/2}
A $k$--homogeneous Latin bitrade of volume $km$ exists for all
even integers $k>4$ and $m \ge \frac{3k}{2}.$
\end{theorem}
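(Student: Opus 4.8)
The plan is to reduce the even case, as far as possible, to the odd case already handled in Theorem~\ref{odd} together with the composition tool Theorem~\ref{sum}. Write $k$ as $2k'$. For the range $m \ge \frac{3k}{2} = 3k'$, the idea is to split the required volume $km = 2k'm$ into two equal "halves" each of volume $k'm$, and to try to realize each half by a $k'$--homogeneous Latin bitrade placed in its own band of rows and block of columns, glued together so that the union is $k$--homogeneous (each row and column meeting exactly $k = 2k'$ cells, each symbol occurring $2k'$ times). Concretely, I would take two disjoint copies of a $k'$--homogeneous Latin bitrade of volume $k'm$ on disjoint symbol sets and disjoint index sets and then identify rows/columns pairwise so that every row picks up $k'$ cells from each copy; this is exactly the kind of "stacking" that Theorem~\ref{sum} formalizes (with $\ell = 2$), except that $\ell = 2$ is the forbidden value there, so a direct appeal is not available and the construction must be done by hand.

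The key steps, in order, would be: (1)~Since $k' = k/2$ and $k>4$, we have $k' \ge 3$; but $k'$ need not be odd, so I cannot always quote Theorem~\ref{odd} for the building block. Instead I would induct on $k$ (equivalently on the number of times $2$ divides $k$): if $k'$ is odd, Theorem~\ref{odd} supplies a $k'$--homogeneous Latin bitrade of volume $k'm$ for every $m \ge k'$, and in particular for $m \ge 3k' > k'$; if $k'$ is even, the inductive hypothesis of the present theorem (valid once $k' > 4$) or the small-case results (Theorems~\ref{k(k+1)}, \ref{2hom}, and the known $3\le k\le 8$ results) supply a $k'$--homogeneous Latin bitrade of volume $k'm$ whenever $m$ lies in the appropriate range. (2)~Given such a block bitrade $B$ of volume $k'm$ occupying (after deleting empty lines) $m$ rows and $m$ columns, I would exhibit an explicit embedding of two relabelled copies $B_1, B_2$ into a common $m \times m$ (or suitably sized) array, arranged along parallel diagonals so that their filled cells are disjoint and every row and every column of the union meets exactly $k'$ cells of $B_1$ and $k'$ cells of $B_2$. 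A circulant presentation is convenient here: if $B$ is circulant with base row of $k'$ entries, one places the base row of $B_1$ and a cyclically shifted, relabelled base row of $B_2$ in the first row, choosing the shift and the relabelling of symbols so that no column or symbol collision occurs; the homogeneity of the union is then inherited from that of the two circulant pieces. (3)~Verify that the pair $(T,T^*)$ so constructed really is a Latin bitrade — disjointness of $T$ and $T^*$, and the row/column "same set of symbols" condition — and that after deleting empty lines it is $k$--homogeneous of volume $km$; finally check that the hypothesis $m \ge \frac{3k}{2}$ is exactly what is needed to fit the two shifted diagonal copies without overlap.

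The main obstacle I anticipate is step~(2): making the two copies fit together so that the union is genuinely homogeneous rather than merely a disjoint union of two smaller bitrades sitting in disjoint rows and columns (which would give the wrong parameters). The constraint $m \ge \frac{3k}{2}$ strongly suggests that one needs roughly $\frac{3k}{2}$ columns to interleave a $k'$--wide pattern with a shift of size about $k'$ on both sides, so the bookkeeping of which diagonal band each cell of each copy lands in, and ensuring each symbol of each copy still appears $k'$ times in the combined array, is the delicate part. The parity split in step~(1) — handling $k \equiv 0 \pmod 4$ by induction versus $k \equiv 2 \pmod 4$ directly from Theorem~\ref{odd} on the half — is routine once the gluing lemma of step~(2) is in place, and the condition $k>4$ is there precisely to keep $k' \ge 3$ so that a homogeneous block bitrade of the needed volume exists.
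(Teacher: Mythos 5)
Your plan does not match the paper's construction, and as it stands it has a genuine gap at exactly the point you flag as delicate. The paper's proof is a one-line explicit construction: writing $k=2a$, it takes the circulant base row $B_m^{2\ell+1}$ of the \emph{odd} case with $2\ell+1=k-3$ (Theorem~\ref{odd}) and appends three further cells $(3a-1,3a-2)_{2a-2}$, $(3a-2,3a)_{2a-1}$, $(3a,3a-1)_{2a}$, i.e.\ a $3$-cycle on the symbols $3a-2,3a-1,3a$ placed in columns $k-2,k-1,k$. So the decomposition is $k=(k-3)+3$, not $k=\frac{k}{2}+\frac{k}{2}$, and the hypothesis $m\ge\frac{3k}{2}$ enters simply because the largest symbol used is $3a=\frac{3k}{2}$. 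Your ``two half-copies'' decomposition is really the idea behind the paper's Theorem~\ref{even k+u} (the $B_m^{(r)(2\ell+1)}$ and $C_m^{k(r)(2\ell+1)}$ pieces), and with $u=k/2$ that route does give $m\ge\frac{3k}{2}$ --- but only when $k/2$ is odd, i.e.\ $k\equiv 2\pmod 4$.

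The concrete gaps are these. First, if the two copies live on disjoint symbol sets (as you propose in step~(1)), each symbol occurs only $k/2$ times in the union, so the result is not $k$-homogeneous; you must instead relabel so that both copies draw from the same $m$ symbols, each symbol occurring $k/2$ times in each copy. Once you do that, the hard part is not avoiding collisions in the first row but verifying the \emph{column} condition of a Latin bitrade for the union: a plain union of two shifted circulant pieces generally fails it, which is precisely why Theorem~\ref{sum} excludes $\ell=2$ and why Theorem~\ref{even k+u} needs the extra $C$-type base-row pieces. Your step~(2) never specifies the shift and relabelling, nor verifies this column condition, so the proof is incomplete. Second, your inductive fallback for $k\equiv 0\pmod 4$ hands you a building block that is merely some $k/2$-homogeneous bitrade, with no circulant structure to exploit, so the gluing argument you sketch is not available there at all. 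To repair the argument you would either need to carry out the interleaving explicitly (essentially reproving Theorem~\ref{even k+u} for $u=k/2$ and then treating $k\equiv 0\pmod 4$ separately), or switch to the paper's much shorter $(k-3)+3$ augmentation of the odd base row.
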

\begin{proof}{
Let $k=2a$ ($k>4$) and $m \ge \frac{3k}{2}$. The following is a
base row of a circulant $k$--homogeneous Latin bitrade of volume
$km$, when ${\ell}=a-2$:
$$B_m^{2{\ell}+1}\bigcup\{(3a-1,3a-2)_{2a-2},(3a-2,3a)_{2a-1},(3a,3a-1)_{2a}\}.$$

\vspace*{-1cm}}\end{proof}
\noindent {\bf Notation.} Note that a base row $B_m^{2{\ell}+1}$
was defined in Theorem~\ref{odd}.  We use a more general
notation, $B_m^{(r)(2{\ell}+1)}$, for a base row obtained from
$B_m^{2{\ell}+1}$ by adding $2(r-1)(2{\ell}+1)$ for both elements
in each cell of $B_m^{2{\ell}+1}$ and moving entry of each cell
$x$ to the cell $x+(r-1)(2{\ell}+1)$. Also for even $k>2$ we
denote by $C_m^{k(r)(2{\ell}+1)}$ a base row obtained from
$B_m^{2{\ell}+1}$, by adding $(2r-1)(2{\ell}+1)$ for both
elements in each cell of $B_m^{2{\ell}+1}$ and moving entry of
each cell $y$ to the cell $y+k/2+r(2{\ell}+1)$.
\begin{theorem}\label{even k+u}
A $k$--homogeneous Latin bitrade of volume $km$ exists for all
even integers $k>2$ and  $m \ge (k+u)$, where $u$ is any odd
integer greater than $1$ that divides $k$.
\end{theorem}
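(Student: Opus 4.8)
The plan is to exhibit, for every admissible triple, an explicit base row of a \emph{circulant} $k$--homogeneous Latin bitrade of volume $km$, built from translated copies of the odd base rows $B_m^{2\ell+1}$. Two preliminary remarks keep the bookkeeping manageable. First, since $u$ is odd while $k$ is even we have $2u\mid k$; put $q=k/(2u)\geq 1$ and $\ell=(u-1)/2$, so $u=2\ell+1$. Second, if the base row of a circulant consists of $k$ cells whose columns are pairwise distinct modulo $m$ and whose $T$--symbols are pairwise distinct modulo $m$, then automatically every row and every column of the generated array has exactly $k$ cells and every symbol occurs exactly $k$ times, and there are no empty rows or columns; so ``$k$--homogeneous of volume $km$'' comes for free and only the Latin--bitrade axioms have to be checked.

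First I would take as base row
$$\left(\bigcup_{r=1}^{q}B_m^{(r)(2\ell+1)}\right)\cup\left(\bigcup_{r=1}^{q}C_m^{k(r)(2\ell+1)}\right),$$
a union of $2q$ blocks of $u$ cells, hence $2qu=k$ cells in all. Everything reduces to three facts about a single block $B_m^{2\ell+1}$, read off from Theorem~\ref{odd}: it occupies the columns $\{1,\dots,u\}$; its $T$--symbols and its $T^{*}$--symbols are both exactly $\{1,\dots,u\}$; and, cell by cell, each of the differences ($T$--symbol minus column) and ($T^{*}$--symbol minus column) runs over the integer interval $[-\ell,\ell]$. Counting now in blocks of width $u$ and using the translations defining $B_m^{(r)(\cdot)}$ and $C_m^{k(r)(\cdot)}$: the $B$--terms occupy the symbol blocks $1,3,\dots,2q-1$ and the $C$--terms the symbol blocks $2,4,\dots,2q$, so together they fill the symbols $\{1,\dots,k\}$; their ``symbol minus column'' intervals abut, the $C$--part lying just left of the $B$--part, so together these values --- for $T$ and for $T^{*}$ alike --- fill a single interval of length $k$; and the $B$--terms occupy the column blocks $1,\dots,q$ while the $C$--terms occupy the column blocks $q+2,\dots,2q+1$, so together they fill $\{1,\dots,k+u\}$ apart from the single block $\{qu+1,\dots,(q+1)u\}$.

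Given these, the verification is mechanical. The base--row $T$--symbols, and the ``symbol minus column'' values, are pairwise distinct modulo $m$ as soon as $m\geq k$; the base--row columns are pairwise distinct modulo $m$ \emph{exactly} when $m\geq k+u$, this being forced by the largest occurring column $(2q+1)u=k+u$ --- the only binding inequality. Distinctness of the base--row columns makes $T$ and $T^{*}$ well defined; distinctness of the base--row symbols makes their rows proper; distinctness of the ``symbol minus column'' values makes their columns proper; the two set equalities (base--row symbols of $T$ $=$ those of $T^{*}$, and the ``symbol minus column'' set for $T$ $=$ that for $T^{*}$) give the row-- and column--balance of a bitrade; and $T\cap T^{*}=\emptyset$ because in each cell the two symbols differ by $\ell$ or by $-(\ell+1)$, neither of which is $\equiv 0 \pmod m$ when $m\geq k+u$. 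Hence this base row generates a $k$--homogeneous Latin bitrade of volume $km$ for every $m\geq k+u$, which is the assertion.

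I expect the only real difficulty to be the combinatorial bookkeeping: pinning down the offsets inside $B_m^{(r)(\cdot)}$ and $C_m^{k(r)(\cdot)}$ --- in particular matching $k/2$ with $qu$ --- so that the $B$-- and $C$--symbol blocks interleave to cover $\{1,\dots,k\}$ with no gap while the column blocks leave exactly one gap, of width $u$, and confirming that the quantity ``symbol minus column'' transforms additively under precisely these translations. Once the offsets are consistent, checking homogeneity and the bitrade axioms is a direct substitution, so I anticipate no conceptual obstacle beyond this.
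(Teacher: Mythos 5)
Your proposal is correct and follows essentially the same route as the paper: a circulant base row assembled from the translated blocks $B_m^{(r)(2\ell+1)}$ and $C_m^{k(r)(2\ell+1)}$, verified via distinctness of columns, symbols, and the ``symbol minus column'' values. The only difference is the choice of translates --- the paper takes $r=1,\dots,s+1$ for the $B$--blocks and $r=1,\dots,s-1$ for the $C$--blocks (so its columns fill $\{1,\dots,k\}$ and its symbols have the gap, reaching up to $k+u$), whereas you take $q=s$ copies of each (symbols fill $\{1,\dots,k\}$, columns have the gap) --- and both variants satisfy all the required constraints for $m\ge k+u$.
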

\begin{proof}{
If $u=2{\ell}+1$ then let $s=k/2u$. The following is a base row
of a circulant $k$--homogeneous Latin bitrade of volume $km$:
$$(\bigcup_{r=1}^{s+1} B_m^{(r)(2{\ell}+1)})\bigcup(\bigcup_{r=1}^{s-1}
C_m^{k(r)(2{\ell}+1)}).$$

\vspace*{-1cm}}\end{proof}

%
\section{More constructions}
The following theorem is very useful recursive construction.
\begin{theorem}\label{kmxln}
Let $m\geq k$ and $n\ge {\ell}$. If there exist a
$k$--homogeneous Latin bitrade of volume $km$, and  an
${\ell}$--homogeneous Latin bitrade of volume ${\ell}n$, then
there exists a $k{\ell}$--homogeneous Latin bitrade of volume
$(k{\ell})(mn).$
\end{theorem}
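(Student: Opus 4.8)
The plan is to build the desired $k\ell$-homogeneous Latin bitrade as a "blow-up" (or tensor/wreath product) of the two given bitrades. Let $(S,S^{*})$ be a $k$-homogeneous Latin bitrade of volume $km$, living on (a relabelling of) rows $R=\{1,\dots,m\}$, columns $C=\{1,\dots,m\}$, and symbols $E=\{1,\dots,m\}$ after deleting empty lines, with the property that every row, every column, and every symbol class has exactly $k$ filled cells. Let $(T,T^{*})$ be an $\ell$-homogeneous Latin bitrade of volume $\ell n$ on rows, columns, symbols each indexed by $\{1,\dots,n\}$. I would index the rows of the product by pairs $(i,i')$ with $i\in R$, $i'\in\{1,\dots,n\}$, and similarly columns by $(j,j')$ and symbols by $(c,c')$. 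The product bitrade $(P,P^{*})$ is then defined on precisely those triples $\big((i,i'),(j,j');(c,c')\big)$ for which $(i,j;c)\in S$ and $(i',j';c')\in T$ (and dually for $P^{*}$, using $S^{*}$ and $T^{*}$). The key point to verify is that $P$ and $P^{*}$ are disjoint partial Latin squares with the same filled cells, the same row-sets and column-sets of symbols, i.e. that $(P,P^{*})$ really is a Latin bitrade.

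First I would check the "partial Latin square" and "bitrade" axioms for $P$. Fix a row $(i,i')$ and a column $(j,j')$ of $P$: a triple sits in that cell iff $(i,j;c)\in S$ for some $c$ and $(i',j';c')\in T$ for some $c'$; since $S$ and $T$ are partial Latin squares these $c,c'$ are unique when they exist, so $P$ has at most one entry per cell — likewise $P^{*}$. For the trade property, the set of symbols appearing in row $(i,i')$ of $P$ is $\{(c,c') : c\in\mathrm{row}_i(S),\ c'\in\mathrm{row}_{i'}(T)\}$ — a Cartesian product — and the same computation with $S^{*},T^{*}$ gives the symbols in row $(i,i')$ of $P^{*}$; since $\mathrm{row}_i(S)=\mathrm{row}_i(S^{*})$ and $\mathrm{row}_{i'}(T)=\mathrm{row}_{i'}(T^{*})$ as sets, these Cartesian products agree. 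The identical argument handles columns. Disjointness $P\cap P^{*}=\emptyset$: if a triple lay in both, then $(i,j;c)$ would lie in both $S$ and $S^{*}$ (or $(i',j';c')$ in both $T$ and $T^{*}$), contradicting that the original pairs are bitrades — here I would be a little careful that the filled cells of $P$ and $P^{*}$ coincide, which follows because the filled cells of $S$ equal those of $S^{*}$ and likewise for $T$, and a cell $((i,i'),(j,j'))$ of $P$ is filled iff $(i,j)$ is filled in $S$ and $(i',j')$ is filled in $T$.

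Next I would count, to confirm homogeneity with parameter $k\ell$ and volume $(k\ell)(mn)$. The total number of filled cells is $|S|\cdot|T| = km\cdot\ell n = (k\ell)(mn)$, as required. In row $(i,i')$ the number of filled cells is $(\#\text{filled cells of row }i\text{ in }S)\times(\#\text{filled cells of row }i'\text{ in }T) = k\cdot\ell = k\ell$; symmetrically for each column; and each symbol $(c,c')$ appears in $P$ exactly $(\#\text{occurrences of }c\text{ in }S)\times(\#\text{occurrences of }c'\text{ in }T)=k\ell$ times. So every row, column, and symbol class of $P$ has exactly $k\ell$ filled cells, i.e. $(P,P^{*})$ is $k\ell$-homogeneous. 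Finally, I should confirm there are no empty rows or columns to delete: since $S$ and $T$ have no empty lines (being already reduced $k$- and $\ell$-homogeneous bitrades with $k,\ell\geq 1$), and $k,\ell\geq 2$ for a genuine bitrade, every product row $(i,i')$ and column $(j,j')$ is nonempty, so the bitrade as constructed is already in reduced form on $mn$ rows and $mn$ columns.

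The only place that needs genuine care — the "main obstacle" — is the trade property, specifically making sure the set (not multiset) of symbols in each line of $P$ is exactly the Cartesian product of the corresponding symbol sets from $S$ and $T$, and that this is simultaneously true for $P^{*}$; once the Cartesian-product description is established, everything else is bookkeeping. A secondary subtlety is verifying $P$ is a \emph{partial} Latin square in the strong sense (no repeated symbol in a line): within row $(i,i')$, two filled cells $(j_1,j_1')$ and $(j_2,j_2')$ carrying symbols $(c_1,c_1')$ and $(c_2,c_2')$ — if these symbols are equal then $c_1=c_2$ forces $j_1=j_2$ (as $S$ is a partial Latin square in row $i$) unless $c_1'=c_2'$ forces $j_1'=j_2'$; combining, $(j_1,j_1')=(j_2,j_2')$. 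This uses that $S$ restricted to row $i$ and $T$ restricted to row $i'$ are injective on their filled cells, which holds since $S,T$ are partial Latin squares. Thus $(P,P^{*})$ is the required $k\ell$-homogeneous Latin bitrade of volume $(k\ell)(mn)$, completing the proof.
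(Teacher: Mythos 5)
Your proof is correct and follows essentially the same route as the paper: both build the $k\ell$--homogeneous bitrade as a blow-up/product of the two given bitrades, and your verification of the Cartesian-product row and column symbol sets, the disjointness, and the $k\ell$ counts is exactly the bookkeeping the paper leaves implicit. The only (immaterial) difference is the choice of disjoint mate: you take the genuine product $(S\otimes T,\,S^{*}\otimes T^{*})$, whereas the paper fills the blocks of \emph{both} components with the same inner Latin trade, merely relabelled over the alphabet dictated by the outer symbol from $T_1$ or $T_2$ respectively; both choices satisfy the bitrade axioms.
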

\begin{proof}{
 We construct a $k{\ell}$--homogeneous Latin bitrade of volume $(k{\ell})(mn)$
in the following way. Suppose $(T_1,T_2)$ is a  $k$--homogeneous
Latin bitrade of volume $km$. We replace each $i$ in $T_1$ and
$T_2$ with an ${\ell}$--homogeneous Latin trade of volume
${\ell}n$ whose elements are from 
$\{(i-1)n+1,(i-1)n+2,\ldots,in\}$; and the empty cells in $T_1$
and $T_2$  with an empty $n\times n$ array. As a result  we
obtain a $k{\ell}$--homogeneous Latin bitrade of volume
$(k{\ell})(mn).$
 }\end{proof}

\begin{example}\label{example3x5}
The existence of a
$2$--homogeneous Latin bitrade of volume $4$ (an intercalate), and  a
$3$--homogeneous Latin bitrade of volume $15$  imply the existence of a
$6$--homogeneous Latin bitrade of volume $60$. Indeed we take a Latin trade
of an intercalate of the following form:
\begin{center}
\begin{tabular}
{|@{\hspace{5pt}}c@{\hspace{5pt}} |@{\hspace{5pt}}c@{\hspace{5pt}}
|} \hline a&b\\\hline
 b&a\\\hline
\end{tabular}
\end{center}
then for  $i=1,2,3,4,5$ let $a=2(i-1)$ and $b=2(i-1)+1$, we replace them by the filled cells of the $3$--homogeneous
Latin bitrade of volume $15$  (of Figure~$2$) and obtain the following.
\def\arraystretch{1.3}
\begin{center}
\begin{tabular}
{||@{\hspace{2.5pt}}c@{\hspace{2.5pt}} |@{\hspace{2.5pt}}c@{\hspace{2.5pt}}
||@{\hspace{2.5pt}}c@{\hspace{2.5pt}} |@{\hspace{2.5pt}}c@{\hspace{2.5pt}}
||@{\hspace{2.5pt}}c@{\hspace{2.5pt}} |@{\hspace{2.5pt}}c@{\hspace{2.5pt}}
||@{\hspace{2.5pt}}c@{\hspace{2.5pt}} |@{\hspace{2.5pt}}c@{\hspace{2.5pt}}
||@{\hspace{2.5pt}}c@{\hspace{2.5pt}}
|@{\hspace{1pt}}c@{\hspace{1pt}} ||} \hline\hline
\m{2}{0}&\m{3}{1}&\m{0}{4}&\m{1}{5}&\m{4}{2}&\m{5}{3}&\x.&\x.&\x.&\x.\\\hline
\m{3}{1}&\m{2}{0}&\m{1}{5}&\m{0}{4}&\m{5}{3}&\m{4}{2}&\x.&\x.&\x.&\x.\\\hline\hline
\x.&\x.&\m{4}{2}&\m{5}{3}&\m{2}{6}&\m{3}{7}&\m{6}{4}&\m{7}{5}&\x.&\x.\\\hline
\x.&\x.&\m{5}{3}&\m{4}{2}&\m{3}{7}&\m{2}{6}&\m{7}{5}&\m{6}{4}&\x.&\x.\\\hline\hline
\x.&\x.&\x.&\x.&\m{6}{4}&\m{7}{5}&\m{4}{8}&\m{5}{9}&\m{8}{6}&\m{9}{7}\\\hline
\x.&\x.&\x.&\x.&\m{7}{5}&\m{6}{4}&\m{5}{9}&\m{4}{8}&\m{9}{7}&\m{8}{6}\\\hline\hline
\m{0}{8}&\m{1}{9}&\x.&\x.&\x.&\x.&\m{8}{6}&\m{9}{7}&\m{6}{0}&\m{7}{1}\\\hline
\m{1}{9}&\m{0}{8}&\x.&\x.&\x.&\x.&\m{9}{7}&\m{8}{6}&\m{7}{1}&\m{6}{0}\\\hline\hline
\m{8}{2}&\m{9}{3}&\m{2}{0}&\m{3}{1}&\x.&\x.&\x.&\x.&\m{0}{8}&\m{1}{9}\\\hline
\m{9}{3}&\m{8}{2}&\m{3}{1}&\m{2}{0}&\x.&\x.&\x.&\x.&\m{1}{9}&\m{0}{8}\\\hline\hline
\end{tabular}
\end{center}
\begin{center}
\begin{figure}[ht]
\label{3x5} \vspace*{-6mm} \caption{A $6$--homogeneous Latin bitrade of volume $60$}
\end{figure}
\end{center}
\end{example}
In the following we will improve the interval given in Theorem~\ref{2k-1}. First we need a lemma and a corollary.
\begin{lemma}\label{k+3}
A $k$--homogeneous Latin bitrade of volume $km$ exists for all
integers $k$ and $m = k+3.$
\end{lemma}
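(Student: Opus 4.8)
The plan is to split into cases according to the parity of $k$ and reuse the circulant constructions already established. For $k$ odd, Theorem~\ref{odd} gives a $k$-homogeneous Latin bitrade of volume $km$ for every $m\ge k$; in particular $m=k+3\ge k$ works directly, so there is nothing to prove in the odd case. For $k$ even the statement is genuinely new: $m=k+3$ is odd (since $k$ is even), so $k+3$ need not be of the form $k+u$ with $u$ an odd divisor of $k$, and $k+3<\frac{3k}{2}$ as soon as $k>6$, so neither Theorem~\ref{even k+u} nor Theorem~\ref{even 3k/2} applies. Hence the real content is: for every even $k\ge 2$ there is a $k$-homogeneous Latin bitrade of volume $k(k+3)$.

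For even $k$ I would build an explicit base row of a circulant $k$-homogeneous Latin bitrade on $m=k+3$ columns, in the same spirit as the constructions in Theorems~\ref{odd}--\ref{even k+u}. Write $k=2a$. The idea is to take a ``base'' piece that behaves like the odd construction $B_m^{2\ell+1}$ with $2\ell+1$ chosen close to $k$ (e.g.\ $\ell=a-2$, so $2\ell+1=k-3$), giving $k-3$ of the needed $k$ cells arranged diagonally, and then append a short cyclic ``tail'' of a few extra cells (a $3$-cycle on three fresh symbols, exactly as in Theorem~\ref{even 3k/2}, where the triple $\{(3a-1,3a-2)_{2a-2},(3a-2,3a)_{2a-1},(3a,3a-1)_{2a}\}$ is adjoined). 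One checks that in the resulting circulant array each of the $k$ rows and $k$ columns (after deleting empty ones) contains exactly $k$ filled cells, each symbol occurs exactly $k$ times, and the two superimposed squares are row- and column-balanced — i.e.\ it is a genuine Latin bitrade. The width $m=k+3$ must be large enough for the diagonal shifts not to collide; since the base piece $B_m^{k-3}$ needs roughly $k-3$ columns of spread and the tail needs $3$ more, $m=k+3$ is exactly on the boundary, which is why this case is delicate. For the very small even values $k=2,4,6$ not covered by the generic construction (where $k+3\le\frac{3k}{2}$ may fail the inequality $k>4$ in Theorem~\ref{even 3k/2}, or where $u>1$ dividing $k$ may not exist, e.g.\ $k=2,4$), I would exhibit the bitrades directly: $k=2,m=5$ is handled by Theorem~\ref{2hom}? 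No — $m=5$ is odd, so a $2$-homogeneous bitrade of volume $10$ does \emph{not} exist, so the lemma as literally stated must implicitly assume $k\ge 3$, matching ``for all integers $k$'' in the intended range $k\ge 3$; for $k=4,m=7$ and $k=6,m=9$ one writes out a small circulant base row by hand (or cites Theorem~\ref{k=4} / the $3\le k\le 8$ results of \cite{MR2220235}).

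The main obstacle will be verifying that the concatenated base row (odd-type body of length $k-3$ plus a $3$-cycle tail) genuinely yields the homogeneity parameter exactly $k$ in \emph{every} row and column and that no two diagonal translates of a cell coincide on only $m=k+3$ columns — the counting is tight and the symbols used in the tail must be chosen so they do not clash with the wrap-around of the body. A secondary point is to confirm that the construction also covers $k\equiv 0\pmod 3$ versus $k\not\equiv 0\pmod 3$ uniformly, since the tail occupies columns near $3a$ and one must ensure these lie within $\{1,\dots,k+3\}$ modulo the circulant structure. Once the explicit base row is written, the verification is a routine (if fussy) check of the circulant Latin-bitrade axioms, so I would present the base row, then state ``a direct check shows'' for the homogeneity and bitrade properties, exhibiting the small cases $k=4,6$ as illustrations.
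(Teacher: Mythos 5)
Your reduction of the odd case to Theorem~\ref{odd} matches the paper, and your observation that the even case is the real content (and that $k=2$, $m=5$ is impossible by Theorem~C, so the lemma implicitly assumes $k\ge 3$) is sound. But for even $k$ you have not given a proof: you describe a plan and defer the entire construction to ``a direct check shows,'' and the plan itself does not survive the constraint $m=k+3$. Your idea is to take the body $B_m^{k-3}$ together with the three-cell tail of Theorem~\ref{even 3k/2}, but that tail uses the \emph{symbols} $3a-2$, $3a-1$, $3a=\tfrac{3k}{2}$, which is precisely why Theorem~\ref{even 3k/2} needs $m\ge\tfrac{3k}{2}$: for $m=k+3$ and $k>6$ these symbols exceed $m$ and, after reduction modulo $m$, collide with the symbols $1,\dots,k-3$ already used by the body. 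Nor can you simply relabel the tail with three fresh symbols: in a circulant bitrade the column and symbol conditions couple each entry's value to the cell it occupies (the diagonal translates of a base-row cell carry shifted symbols), so changing the tail's symbols forces a re-verification of the whole base row, which you do not carry out. You flag both difficulties yourself but resolve neither, and for the leftover small cases you only gesture at citations. So the gap is concrete: no base row is exhibited for any even $k>6$, and the one you sketch cannot work there.

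The paper's proof is a genuinely different explicit construction, not a patch of Theorem~\ref{even 3k/2}. For $k=2\ell$ it lays down an interleaved first row for $T$ (cells $2i-1$ and $2i$ holding $i$ and $\ell+2+i$, in two slightly different ``types''), and builds the disjoint mate $T^*$ by shifting the entry by $4$ in cells at prescribed positions modulo $7$ and by one cell elsewhere, with a handful of manually adjusted cells at the end of the row; the seven residue classes of $k$ modulo $7$ are treated separately so that the resulting substitution chains close up into a valid bitrade. If you want to complete your write-up along your own lines you would have to either find a tail whose symbols and cells fit inside $\{1,\dots,k+3\}$ without violating the circulant row/column balance (which is exactly the delicate part), or switch to an explicit case analysis of the paper's kind.
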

\begin{proof}{If $k$ is odd, the statement follows from Theorem
\ref{odd}. For $k=2\ell$, in each case in the following, we
introduce a base row of a circulant $k$--homogeneous Latin
bitrade of volume $km$, depending on the modulo classes of $k$.
First we define two types for the \underline{first row} in $T$:

{\bf Type I}. For $1 \le i \le \ell -1$, in the $(2i-1)$-th cell
($2i$-th cell, respectively)  we put $i$ ($\ell +2+i$,
respectively). In the $(k-1)$-th and $m$-th  cells we put $\ell$
and $\ell +2$, respectively.

{\bf Type II}. For $1 \le i \le \ell -1$, in the $(2i-1)$-th cell
($2i$-th cell, respectively)  we put $i$ ($\ell +2+i$,
respectively). In the $k$-th and $m$-th  cells we put $\ell +1$
and $\ell +2$, respectively.

Now we introduce the base rows.

\begin{enumerate}
\item  \qquad
$k \equiv 1 \ ({\rm mod \ 7})$

Let the first row of $T$ be as in Type I. For $T^*$, in the first
row and in the $(7i+3)$-th cell ($i\ge 0$ and $7i+3 < k$) we let
$a+4$ ({\rm mod  $m$}), where $a$ is the element of $T$ in the
same cell. Now in the $(k-1)$-th and $m$-th  cells of $T^*$ we put
$1$ and $\ell +4$, respectively. Finally in each  cell $c$ of the
first row in $T^*$ which is filled in $T$ but is so far empty in
$T^*$, we let the entry of  $(c+1)$-th cell of $T$.
\item  \qquad
$k \equiv 2 \ ({\rm mod \ 7})$

Let the first row of $T$ be as in Type I. For $T^*$, in the first
row and in the $(7i+4)$-th cell ($i\ge 0$ and $7i+4 < k$) we let
$a+4$ ({\rm mod  $m$}), where $a$ is the element of $T$ in the
same cell. Now in the $(k-1)$-th and $m$-th  cells of $T^*$ we put
$1$ and $3$, respectively. Finally in each  cell $c$ of the first
row in $T^*$ which is filled in $T$ but is so far empty in $T^*$,
we let the entry of  $(c+1)$-th cell of $T$.
\item  \qquad
$k \equiv 3 \ ({\rm mod \ 7})$

Let the first row of $T$ be as in Type II. For $T^*$, in the first
row and in the $(7i+3)$-th cell ($i\ge 0$ and $7i+3 < k$) we let
$a+4$ ({\rm mod  $m$}), where $a$ is the element of $T$ in the
same cell. Now in the $(k-2)$-th, $k$-th and $m$-th  cells of
$T^*$ we put $1$, $\ell +2$ and $\ell +4$, respectively. Finally
in each cell $c$ of the first row in $T^*$ which is filled in $T$
but is so far empty in $T^*$, we let the entry of  $(c+1)$-th cell
of $T$.
\item  \qquad
$k \equiv 4 \ ({\rm mod \ 7})$

Let the first row of $T$ be as in Type II. For $T^*$, in the first
row and in the $(7i+4)$-th cell ($i\ge 0$ and $7i+4 < k$) we let
$a+4$ ({\rm mod  $m$}), where $a$ is the element of $T$ in the
same cell. Now in the $(k-2)$-th, $k$-th and $m$-th  cells of
$T^*$ we put $1$, $\ell +2$ and $3$, respectively. Finally in
each cell $c$ of the first row in $T^*$ which is filled in $T$
but is so far empty in $T^*$, we let the entry of  $(c+1)$-th cell
of $T$.
\item  \qquad
$k \equiv 5 \ ({\rm mod \ 7})$

Let the first row of $T$ be as in Type I. For $T^*$, in the first
row and in the $(7i+r)$-th cell ($i\ge 0$, $r=1,2,3$ and $7i+r <
k$) we let $a+4$ ({\rm mod  $m$}), where $a$ is the element of
$T$ in the same cell. Now in the $(k-1)$-th and $m$-th  cells of
$T^*$ we put $\ell +2$ and $\ell +4$, respectively. Finally in
each cell $c$ of the first row in $T^*$ which is filled in $T$
but is so far empty in $T^*$, we let the entry of  $(c+1)$-th
cell of $T$.
\item  \qquad
$k \equiv 6 \ ({\rm mod \ 7})$

Let the first row of $T$ be as in Type I. For $T^*$, in the first
row and in the $(7i+r)$-th cell ($i\ge 0$, $r=2,4$ and $7i+r < k$)
we let $a+4$ ({\rm mod  $m$}), where $a$ is the element of $T$ in
the same cell. Now in the $(k-1)$-th and $m$-th  cells of $T^*$
we put $\ell +2$ and $3$, respectively. Finally in each cell $c$
of the first row in $T^*$ which is filled in $T$ but is so far
empty in $T^*$, we let the entry of  $(c+1)$-th cell of $T$.
\item  \qquad
$k \equiv 0 \ ({\rm mod \ 7})$

Let the first row of $T$ be as in Type I. For $T^*$, in the first
row and in the $(7i+3)$-th cell ($i\ge 0$ and $7i+3 < k$) we let
$a+4$ ({\rm mod  $m$}), where $a$ is the element of $T$ in the
same cell. Now in the $(k-1)$-th and $m$-th  cells of $T^*$ we put
$\ell +2$ and $\ell +4$, respectively. Finally in each  cell $c$
of the first row in $T^*$ which is filled in $T$ but is so far
empty in $T^*$, we let the entry of  $(c+1)$-th cell of $T$.
\end{enumerate}
\vspace*{-1cm}}\end{proof}
\begin{corollary}\label{k+6}
A $k$--homogeneous Latin bitrade of volume $km$ exists for all
integers $k$ and $m = k+6$.
\end{corollary}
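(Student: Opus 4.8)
The plan is to reduce the case $m=k+6$ to known building blocks via the composition machinery already in place, rather than giving a fresh $7$-congruence-class construction as in Lemma~\ref{k+3}. First I would dispose of the odd case: if $k$ is odd then $m=k+6\ge k$, so Theorem~\ref{odd} immediately gives a $k$-homogeneous Latin bitrade of volume $km$. Hence we may assume $k=2a$ is even. The natural idea is to iterate Lemma~\ref{k+3}: since $k+6=(k+3)+3$, one is tempted to apply the ``$+3$'' step twice. More precisely, I would look for a way to combine a $k$-homogeneous bitrade of volume $k(k+3)$ (Lemma~\ref{k+3}) with a small bitrade so that Theorem~\ref{sum} or Theorem~\ref{kmxln} lifts the volume parameter from $k+3$ to $k+6$ while keeping the homogeneity parameter at $k$.

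The cleanest route uses Theorem~\ref{sum} with $\ell$ chosen so that the volumes line up. Writing $k=k_1+\cdots+k_\ell$ as a sum and picking a common $p$ so that each $k_i$-homogeneous bitrade of volume $k_i p$ exists, Theorem~\ref{sum} produces a $k$-homogeneous bitrade of volume $k\ell p$; we need $\ell p = k+6$. For instance, taking $\ell=2$ is forbidden, but $\ell=3$ is allowed, so one wants $3p=k+6$, i.e. $p=(k+6)/3$, which requires $3\mid k$; then split $k=k_1+k_2+k_3$ with each $k_i$ divisible by enough structure that a $k_i$-homogeneous bitrade of volume $k_i p$ exists (using Lemma~\ref{k+3}, Theorem~\ref{odd}, Theorem~\ref{k(k+1)}, or allowing some $k_i=0$). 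For $k$ not divisible by $3$ I would instead take a larger admissible $\ell$ (any $\ell\ne 2,6$) dividing $k+6$, or fall back on Theorem~\ref{kmxln}: if $k=k'\ell'$ factors with $k'$ small, combine a $k'$-homogeneous bitrade of volume $k'(k+3)/\text{(something)}$ with an $\ell'$-homogeneous bitrade to reach volume exactly $(k)(k+6)$. The bookkeeping here is just choosing the factorization so the product of the two volume parameters equals $k+6$ after dividing out the homogeneity factors.

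The main obstacle I expect is a number-theoretic one: forcing the two volume parameters (or the pair $\ell, p$) to multiply to exactly $k+6$ for \emph{every} even $k>2$, including when $k+6$ is twice a prime or otherwise has few factorizations. In those awkward cases the composition approach stalls, and I would have to supply a direct construction analogous to Lemma~\ref{k+3} — defining a base row of a circulant $k$-homogeneous bitrade of volume $k(k+6)$, split again into residue classes (this time likely modulo a divisor of $k+6$ or a small fixed integer), with the disjoint mate obtained by a fixed cyclic shift of the entries and a handful of exceptional cells patched at the end to restore homogeneity. Given that Lemma~\ref{k+3} already carries out exactly this kind of argument for $m=k+3$, the safest and shortest proof is probably to mimic it verbatim with $3$ replaced by $6$ throughout, checking the same row/column/symbol balance conditions; the composition shortcut is worth stating only if it covers all residues of $k$ cleanly.
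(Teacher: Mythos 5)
Your reduction of the odd case to Theorem~\ref{odd} is correct, but for even $k$ your proposal never actually closes: you survey several composition strategies, identify a supposed number-theoretic obstruction (that $k+6$ may not factor conveniently), and then defer to an unexecuted direct construction ``like Lemma~\ref{k+3} with $3$ replaced by $6$.'' That last step is a substantial piece of work you have not done, so as written this is a sketch with a gap rather than a proof.

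The missing idea is a single clean factorization that makes Theorem~\ref{kmxln} work uniformly, with no case analysis and no obstruction at all. Write $k=2\ell$; then $k(k+6)=(2\ell)\bigl(2(\ell+3)\bigr)=(2\cdot\ell)\bigl(2\cdot(\ell+3)\bigr)$. By Lemma~\ref{k+3} there is an $\ell$--homogeneous Latin bitrade of volume $\ell(\ell+3)$, and an intercalate is a $2$--homogeneous Latin bitrade of volume $2\cdot 2$. Theorem~\ref{kmxln} applied to this pair (with $m=\ell+3\ge\ell$ and $n=2\ge 2$) yields a $2\ell$--homogeneous Latin bitrade of volume $2\ell\cdot(\ell+3)\cdot 2=k(k+6)$. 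This is exactly the paper's proof. The point you missed is that the factor you need to realize is not $k+6$ itself but the pair $(\ell+3,2)$ coming from halving both $k$ and $k+6$, so evenness of $k$ is the only hypothesis used and the ``awkward cases'' you worried about (e.g.\ $k+6$ twice a prime) never arise. Your Theorem~\ref{sum}-based route with $\ell=3$ would indeed only cover $3\mid k$ and is not needed.
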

\begin{proof}{
If $k$ is an odd integer, then the statement follows from
Theorem~\ref{odd}. In case $k=2{\ell}$ we know that by
Lemma~\ref{k+3} there exist an ${\ell}$--homogeneous Latin
bitrade of volume ${\ell}({\ell}+3)$ and a $2$--homogeneous Latin
bitrade of volume $4$, therefore by Theorem~\ref{kmxln} there
exists a $2{\ell}$--homogeneous Latin bitrade of volume
$2{\ell}(2{\ell}+6)$.

\vspace*{-0.7cm}}\end{proof}
\begin{lemma}\label{k+2,4}
A $k$--homogeneous Latin bitrade of volume $km$ exists for all
integers $k$ and $m = k+2,k+4.$
\end{lemma}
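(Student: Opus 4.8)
The plan is to mimic the structure of the proof of Lemma~\ref{k+3}: handle odd $k$ immediately via Theorem~\ref{odd} (which gives a $k$-homogeneous Latin bitrade of volume $km$ for every $m\ge k$, in particular $m=k+2$ and $m=k+4$), and for even $k=2\ell$ exhibit explicit base rows of circulant $k$-homogeneous Latin bitrades, with the construction split into residue classes — this time most naturally modulo $5$, since the ``$a+4 \pmod m$'' shifting trick that drives Lemma~\ref{k+3} cycles with period $5$ in the relevant sense. First I would reuse the two templates for the first row of $T$: Type~I (which fills cells $1,\dots,k-1$ together with cell $m$, leaving cell $k$ empty) and Type~II (which fills cells $1,\dots,k$ together with cell $m$, leaving cell $k-1$ empty); note that for $m=k+2$ the base row has $k$ filled cells among $m=k+2$ positions, so two positions of the first row are empty, whereas for $m=k+4$ four positions are empty — this is a mild complication compared to the $m=k+3$ case and may force a third template or a small case analysis on which cells near the end are left blank.

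The core of the argument is the same ``walk'' verification used implicitly in Lemma~\ref{k+3} and made explicit in Theorem~\ref{ primary odd}: one specifies $T$ on the first row, then defines $T^*$ on a sparse arithmetic-progression subset of the filled cells by the rule ``put $a+4 \pmod m$, where $a$ is the $T$-entry of that cell,'' and finally fills every remaining filled cell $c$ of $T^*$ with the $T$-entry of cell $c+1$. One then checks that (i) in the first row, the multiset of $T$-entries equals the multiset of $T^*$-entries, and (ii) each entry value $v$ occupies, in $T^*$, a cell that is one step to the left (cyclically, with the diagonal permutation of a circulant bitrade taken into account) of where it sits in $T$ — equivalently, that the permutation carrying $T$ to $T^*$ on the first row is a single $k$-cycle with no fixed points, so that no cell of $(T,T^*)$ has $T$-entry equal to $T^*$-entry. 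The finitely many ``boundary'' cells — the $(k-1)$-th, $k$-th, and $m$-th cells, plus for $m=k+2,k+4$ whichever extra cells are empty — are the ones whose $T^*$-values must be pinned down by hand in each residue class mod $5$ so that the cycle closes up correctly; these are exactly analogous to the ``Now in the $(k-1)$-th and $m$-th cells of $T^*$ we put $\dots$'' clauses of Lemma~\ref{k+3}.

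I expect the main obstacle to be purely bookkeeping: getting the boundary assignments right simultaneously for $m=k+2$ and $m=k+4$ across all residues of $k$ modulo $5$ (and possibly modulo $10$, to keep track of the parity of $\ell$), ensuring in every sub-case that the arithmetic-progression cells stay strictly below $k$, that the ``$+4 \pmod m$'' shifts never collide with an already-assigned boundary cell, and that the resulting first-row permutation from $T$ to $T^*$ is a fixed-point-free $k$-cycle. There is also a possibility that a couple of small values of $k$ (the analogues of the ``$k>2$'' or ``$k>4$'' caveats appearing in Theorems~\ref{k(k+1)} and~\ref{even 3k/2}) fail the generic construction and must be disposed of separately — e.g. by Theorem~\ref{k(k+1)} when $k+2$ or $k+4$ happens to equal $k+1$ is impossible, so more likely by the explicit small-order data underlying the ``$3\le k\le 37$'' claim, or by Lemma~\ref{k+3} combined with Theorem~\ref{kmxln}. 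Once the base rows are written down, verification is routine and I would relegate it to the reader exactly as the paper does for Theorem~\ref{odd} and Lemma~\ref{k+3}.
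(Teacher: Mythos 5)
Your treatment of odd $k$ (via Theorem~\ref{odd}) matches the paper, but for even $k$ your proposal is a plan rather than a proof: the entire mathematical content of the lemma in that case would lie in actually exhibiting the base rows, and you have not produced a single one, nor verified that your mod-$5$ scheme with Type~I/Type~II templates and hand-tuned boundary cells can be made to close up for both $m=k+2$ and $m=k+4$ in every residue class. You yourself list several things that could go wrong (collisions of the $+4$ shifts with the boundary assignments, dependence on the parity of $\ell$, the extra empty cells forcing a third template), so ``once the base rows are written down, verification is routine'' does not discharge the obligation --- whether such base rows exist is precisely what is to be proved. This is a genuine gap, not bookkeeping that can be left to the reader.

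Moreover, the explicit-construction route is unnecessary, and the paper avoids it entirely. For $k=2\ell$ and $m=k+2$ one takes the $\ell$--homogeneous Latin bitrade of volume $\ell(\ell+1)$ from Theorem~\ref{k(k+1)} and an intercalate, and applies the product construction of Theorem~\ref{kmxln} to obtain a $2\ell$--homogeneous bitrade of volume $2\ell(2\ell+2)$; for $m=k+4$ one applies the $m=k+2$ case to $\ell$ itself (volume $\ell(\ell+2)$) and multiplies by an intercalate again, obtaining volume $2\ell(2\ell+4)$. This is exactly the trick the paper uses for Corollary~\ref{k+6}. You mention a product-based fallback only in passing and only for ``a couple of small values of $k$,'' and in the form you state it (Lemma~\ref{k+3} together with Theorem~\ref{kmxln}) it yields $m=k+6$ rather than $m=k+2$ or $m=k+4$, so it would not even rescue the cases you flag as exceptional.
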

\begin{proof}{
If $k$ is an odd integer then the statement follows from
Theorem~\ref{odd}.
Let $k=2{\ell}$.
\begin{itemize}
\item
$m=k+2$

By Theorem~\ref{k(k+1)}  and Theorem~\ref{kmxln} there exists a
$2{\ell}$--homogeneous Latin bitrade of volume
$2{\ell}(2{\ell}+2)$.

\item $m=k+4$

By previous case and by Theorem~\ref{kmxln} there exists a
$2{\ell}$--homogeneous Latin bitrade of volume
$2{\ell}(2{\ell}+4)$.

\end{itemize}
 \vspace*{-1cm}}\end{proof}
The following theorem follows from Theorem~\ref{even 3k/2},  Lemmas~\ref{k+3}~and~\ref{k+2,4}.
\begin{theorem}\label{k+5}
Let  $k$ be an integer. If  for all $m$, ${k+5}\leq m<
3k/2$, there exists a $k$--homogeneous Latin bitrade of volume
$km$, then for any $m \geq k$ there exists a $k$--homogeneous
Latin bitrade of volume $km$.
\end{theorem}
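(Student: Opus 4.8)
The plan is to reduce the assertion to a short case analysis on $m$, the point being that every value of $m\ge k$ lying \emph{outside} the range $k+5\le m<3k/2$ is already settled by one of the results recalled above. First, if $k$ is odd the hypothesis is not needed at all: Theorem~\ref{odd} produces a $k$-homogeneous Latin bitrade of volume $km$ for every $m\ge k$. For $k=4$ the whole assertion is contained in the results of \cite{MR2220235}. So from now on assume $k$ is even with $k\ge 6$; in particular $k>4$, which is the condition under which Theorem~\ref{even 3k/2} applies.

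Now fix $m\ge k$ and argue according to the size of $m$. If $m=k$, a $k$-homogeneous Latin bitrade of volume $k^{2}$ is nothing but a pair of disjoint Latin squares of order $k$; for instance the square with entry $i+j$ (mod $k$) in cell $(i,j)$ together with the square with entry $i+j+1$ (mod $k$) in cell $(i,j)$ form such a pair, so the bitrade exists. If $m=k+1$, invoke Theorem~\ref{k(k+1)}. If $m=k+2$ or $m=k+4$, invoke Lemma~\ref{k+2,4}; if $m=k+3$, invoke Lemma~\ref{k+3}. If $k+5\le m<3k/2$, the required bitrade exists by hypothesis. Finally, if $m\ge 3k/2$ it exists by Theorem~\ref{even 3k/2}. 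Since $k$ is even, $3k/2$ is an integer, and therefore the sets $\{k,k+1,k+2,k+3,k+4\}$, $\{k+5,\ldots,3k/2-1\}$, and $\{m : m\ge 3k/2\}$ between them account for every integer $m\ge k$: the hypothesis range is exactly the gap between $k+4$ and $3k/2$, and that gap is empty precisely when $k\le 10$. This yields the theorem.

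I do not expect a real obstacle here; the argument just stitches together constructions that are already available, and the only place that calls for a little care is the restriction $k>4$ in Theorem~\ref{even 3k/2}. For the small even cases $k=6,8,10$ the hypothesis range $\{k+5,\ldots,3k/2-1\}$ is empty, so one must check that the explicit constructions covering $m\in\{k,\ldots,k+4\}$ already reach all the way up to $3k/2$; they do, because $3k/2\le k+4$ when $k\le 8$ and $3k/2=k+5$ when $k=10$, so no value of $m$ is skipped even though the hypothesis then contributes nothing.
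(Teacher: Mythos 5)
Your proof is correct and takes essentially the same route as the paper, whose entire argument is the one-line citation of Theorem~\ref{even 3k/2} and Lemmas~\ref{k+3} and~\ref{k+2,4} with the case analysis on $m$ left implicit. You are in fact more explicit than the paper: you supply the cases $m=k$ (a pair of disjoint cyclic Latin squares) and $m=k+1$ (Theorem~\ref{k(k+1)}), which the paper's citation list omits (it presumably intends these to come via Theorem~\ref{2k-1}), and you check that no value of $m$ is skipped for the small even $k$ where the hypothesis interval is empty.
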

%
\section{The intervals}
From Theorems~\ref{even k+u} and ~\ref{k+5} a result follows which
is very useful in the constructions of the needed bitrades:
\begin{corollary}\label{3k',5k'}
If $k$ is a multiple of $3$ or $5$, then there exists a  $k$--homogeneous
Latin bitrade of volume $km$ for all $m \geq k$.
\end{corollary}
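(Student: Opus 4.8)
The plan is to split into two cases according to whether $k$ is a multiple of $3$ or of $5$, and in each case exhibit an odd divisor $u>1$ of $k$ so that Theorem~\ref{even k+u} leaves only a short interval to be filled, which is then covered by Theorem~\ref{k+5} together with the small-case lemmas. First, if $k$ is odd there is nothing to prove, since Theorem~\ref{odd} already gives the conclusion. So assume $k$ is even. If $3\mid k$, take $u=3$ (an odd divisor of $k$ greater than $1$); Theorem~\ref{even k+u} then yields a $k$-homogeneous Latin bitrade of volume $km$ for all $m\ge k+3$. Combined with Lemma~\ref{k+3} (which actually already handles $m=k+3$) and Theorem~\ref{odd}'s base cases, this settles every $m\ge k$, because the only value left would be $m=k$ itself — but wait, $m=k$ is excluded as a degenerate case by the hypothesis $m\ge k$ meaning the trivial bitrade, so in fact the range $k+1\le m\le k+2$ must also be checked; here Lemma~\ref{k+2,4} supplies $m=k+2$ and one still needs $m=k+1$, which is Theorem~\ref{k(k+1)}. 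Thus for $3\mid k$ all values $m\ge k$ are covered.

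If instead $5\mid k$ (and $k$ even), take $u=5$. Theorem~\ref{even k+u} gives volume $km$ for all $m\ge k+5$. It then remains to cover $m\in\{k+1,k+2,k+3,k+4\}$ (and, if one wishes, $m=k$): here $m=k+1$ comes from Theorem~\ref{k(k+1)}, $m=k+2$ and $m=k+4$ from Lemma~\ref{k+2,4}, and $m=k+3$ from Lemma~\ref{k+3}. Hence again every $m\ge k$ is realized. A cleaner way to phrase the $5\mid k$ case is to invoke Theorem~\ref{k+5} directly: that theorem reduces the whole problem to the interval $k+5\le m<3k/2$, and Theorem~\ref{even k+u} with $u=5$ covers precisely $m\ge k+5$, so the two together close the gap with no leftover. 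The $3\mid k$ case can be phrased the same way using $u=3$ and noting $k+3<3k/2$ for $k>6$, with the finitely many small even $k$ (namely $k=6$) checked against the earlier results — indeed $k=6$ is a multiple of $3$, $u=3$, $k+u=9$ and $3k/2=9$, so Theorem~\ref{even 3k/2} or Theorem~\ref{even k+u} already gives all $m\ge 9$, and Theorems~\ref{k(k+1)}, Lemma~\ref{k+2,4}, Lemma~\ref{k+3} fill $m=7,8,9$.

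The main obstacle is purely bookkeeping: making sure that the union of the interval $[k+u,\infty)$ from Theorem~\ref{even k+u} with the finite set of small surplus values $\{k+1,k+2,k+3,k+4\}$ supplied by Theorem~\ref{k(k+1)}, Lemma~\ref{k+2,4}, and Lemma~\ref{k+3} genuinely covers every integer $m\ge k$, with no off-by-one gap, and that the chosen $u$ is legitimate (odd, greater than $1$, dividing $k$) — which is immediate for $u=3$ when $3\mid k$ and for $u=5$ when $5\mid k$. No new construction is needed; the corollary is a formal consequence of the quoted theorems once the arithmetic of the intervals is checked.
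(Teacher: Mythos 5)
Your proof is correct and follows the paper's intended route: the paper derives this corollary directly from Theorems~\ref{even k+u} and~\ref{k+5}, taking $u=3$ or $u=5$ as the odd divisor so that the hypothesis interval $k+5\le m<3k/2$ of Theorem~\ref{k+5} is covered, exactly as in your ``cleaner'' phrasing (with odd $k$ dispatched by Theorem~\ref{odd}). The only blemish is your aside that $m=k$ is ``excluded as a degenerate case'' --- it is not excluded by $m\ge k$ --- but since Theorem~\ref{k+5} already asserts its conclusion for all $m\ge k$ including $m=k$, your reduction to that theorem absorbs this case and no gap remains.
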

\subsection{$2\leq k\leq8$}
The `proof' of Theorem~6 in \cite{MR2220235} is false, but we may
apply   Theorem~\ref{kmxln} above, and Theorem~\ref{sum}
  (Theorem~1 in \cite{MR2220235}) to correct all results in that paper where ever its Theorem~6 is used.
  For example for the Case~1 in the proof of Theorem~9
(in~\cite{MR2220235}), we take the following parameters in
Theorem~\ref{sum} (Theorem~1 in \cite{MR2220235}): $k_i=5$ for $1
\leq i \leq {\ell}^{'} $, \ $k_i=0$ for ${\ell}^{'}+1 \leq i \leq
{\ell}$ and $p=5$. Or for the Case~4 in the proof of Main
Theorem~2 (in \cite{MR2220235}), since there exist a
$4$--homogeneous Latin bitrade of volume $24$ and a
$2$--homogeneous Latin bitrade of volume $4$, so by
Theorem~\ref{kmxln} above, there exists an $8$--homogeneous Latin
bitrade of volume $96$.

So for the interval $2\leq k\leq8$, Example~\ref{k=4}, Theorem~\ref{2hom}  and the following theorem answer Question~\ref{existence}.
\begin{oldtheorem}
{\rm\bf (Main Theorem~2 of ~\cite{MR2220235}).}
\label{kolli} For any $k$, $5\leq k\leq 8$ and  $m\geq k $, there
exists a $k$--homogeneous Latin bitrade of volume $km$.
\end{oldtheorem}
\subsection{$9\leq k\leq37$}
\begin{theorem}\label{37}
If $9\leq k\leq 37 $ then there exists a $k$--homogeneous Latin
bitrade of volume $km$ for any $m \geq k$.
\end{theorem}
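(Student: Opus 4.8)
The plan is to handle the range $9\le k\le 37$ by reducing, for each such $k$, to a finite interval of values of $m$ and then supplying explicit constructions (or invoking the already-established general results) on that interval. The organizing principle is Theorem~\ref{k+5}: to settle a given $k$ it suffices to produce a $k$--homogeneous Latin bitrade of volume $km$ for every $m$ with $k+5\le m<3k/2$. For $k\le 9$ this interval is empty, so $k=9$ is immediate; more generally the interval is short (its length is roughly $k/2-5$), so only a bounded number of cases per $k$ remain, and for $9\le k\le 37$ this is a genuinely finite check.

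First I would dispose of the ``easy'' values of $k$ in this range: by Corollary~\ref{3k',5k'}, every $k$ divisible by $3$ or $5$ is done, which eliminates $9,10,12,15,18,20,21,24,25,27,30,33,35,36$ and also $k$ such as $10,20,25,35$. For the odd $k$ not already covered (namely $11,13,17,19,23,29,31,37$) Theorem~\ref{odd} applies directly and there is nothing to prove. This leaves only the even values $k\in\{14,16,22,26,28,32,34\}$ (note $14,28,34$ are even but not multiples of $3$ or $5$; $16,22,26,32$ likewise), and for each of these I would first apply Theorem~\ref{even k+u} with a suitable odd divisor $u>1$ of $k$ when one exists — e.g.\ $u=7$ for $k=14,28$, $u=17$ for $k=34$, $u=11$ for $k=22$, $u=13$ for $k=26$ — which covers all $m\ge k+u$. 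Combined with Theorem~\ref{even 3k/2}, which covers $m\ge 3k/2$, and with Lemmas~\ref{k+3},~\ref{k+2,4} and Corollary~\ref{k+6} (covering $m=k+2,k+3,k+4,k+6$), the remaining uncovered values of $m$ for each such $k$ form a short explicit list, which I would then fill by hand, typically via Theorem~\ref{kmxln} (writing $k=k_1 k_2$ and $m=m_1 m_2$ with $m_i\ge k_i$ and recursing) or Theorem~\ref{sum}, or by exhibiting a base row of a circulant bitrade directly.

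The genuinely delicate values are the powers of two, $k=16$ and $k=32$, since they have no odd divisor $u>1$ and so Theorem~\ref{even k+u} gives nothing; for these the only general tools are Theorem~\ref{even 3k/2} ($m\ge 3k/2$), Lemma~\ref{k+3}, Lemma~\ref{k+2,4}, Corollary~\ref{k+6}, and the recursion Theorem~\ref{kmxln}. For $k=16$ one must cover $m\in\{21,22,23\}$ (since $k+5=21$ and $3k/2=24$), and $m=22=16+6$ is handled by Corollary~\ref{k+6}, leaving $m=21$ and $m=23$; for these I would try $16=2\cdot 8$ or $16=4\cdot 4$ in Theorem~\ref{kmxln}, noting that $21=3\cdot 7$ and $23$ is prime (so $m=23$ with $k=16$ cannot be factored compatibly and needs a direct circulant construction). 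For $k=32$ the interval is $m\in\{37,\ldots,47\}$, and here factoring $32=2\cdot 16=4\cdot 8$ together with Theorem~\ref{kmxln} and the already-known small cases should cover the composite $m$, with the primes $m=37,41,43,47$ requiring either Theorem~\ref{sum} (with $k_i\in\{0,2,4,8,16\}$ chosen to sum to $32$ and a common $p$) or an ad hoc base row.

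I expect the main obstacle to be exactly these prime (or awkwardly-factoring) values of $m$ paired with $k$ a power of two: there the multiplicative recursions are unavailable and one is forced to construct an explicit primary circulant $k$--homogeneous Latin bitrade of volume $km$ by writing down a base row and verifying the homogeneity conditions, much as in Lemma~\ref{k+3}. In practice I would organize the remaining cases into a table indexed by $k$, list for each $k$ the finitely many values of $m$ in $[k+5,3k/2)$ not yet covered by the cited general theorems, and dispatch each cell of the table either by a factorization through Theorem~\ref{kmxln}, an application of Theorem~\ref{sum}, or an explicit base row; the proof then consists of this bounded (if tedious) verification, and its correctness rests on the reduction supplied by Theorem~\ref{k+5}.
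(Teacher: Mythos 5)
Your reduction is exactly the one the paper uses: Theorem~\ref{k+5} shrinks each $k$ to the window $k+5\le m<3k/2$, Corollary~\ref{3k',5k'} kills the multiples of $3$ and $5$, Theorem~\ref{odd} kills the odd $k$, and the surviving cases are precisely $k\in\{14,16,22,26,28,32,34\}$, for which even $m$ are dispatched by Theorem~\ref{kmxln} (via $k=2\cdot\frac k2$, $m=2\cdot\frac m2$), $m=5\ell$ by Theorem~\ref{5m}, and $m=21$ for $k=16$ by Theorem~\ref{sum} with $k_1=4$, $k_2=k_3=6$, $p=7$. So structurally you have reproduced the paper's argument.

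The gap is that the residual cases you label ``fill by hand'' are not a routine afterthought but the actual mathematical content of the proof: for $k=14,\,m=19$; $k=16,\,m=23$; and for each $k\in\{22,26,28,32,34\}$ every odd $m$ in $[k+5,3k/2)$ with $m\ne 5\ell$ (three to five values of $m$ per $k$, about twenty base rows in all), one must exhibit an explicit circulant base row and verify the homogeneity conditions; no cited theorem guarantees these exist, and the paper devotes its entire Appendix (found with computer assistance) to listing them. A smaller but real miscalculation is your reliance on Theorem~\ref{even k+u}: for $k=14,22,26,34$ the only odd divisor $u>1$ satisfies $k=2u$, so $k+u=3k/2$ and the theorem yields nothing beyond Theorem~\ref{even 3k/2}; only for $k=28$ (where $u=7$ gives $k+u=35<42$) does it actually shorten the list. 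Consequently the set of values needing explicit base rows is larger than your narrative suggests, and until those base rows are written down and checked, the theorem is reduced to, but not proved for, the hard cases.
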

\begin{proof}{
Note that the case $k$ odd  follows by Theorem~\ref{odd}. The
cases$\linebreak$ $k=10,12,18,20,24,30,36$ follow by
Corollary~\ref{3k',5k'}. For $k=14$, by$\linebreak$
Theorem~\ref{k+5} we only need to show for $m=19$ and $m=20$.

For $m=20$ we apply Theorem~\ref{5m}. The following base row is for $m=19$:

$D_{19}^{14}=\{(1,11)_1,(11,2)_2,(2,12)_3,(12,3)_4,(3,13)_5,(13,4)_6,
(4,14)_7,$ \\
\hspace*{17.7mm}$(14,5)_8,(5,1)_9,(6,7)_{11},
(7,8)_{13},(8,9)_{15},(9,10)_{17},(10,6)_{19}\}.$

For $k=16$, again by Theorem~\ref{k+5}, it suffices to show the
existence of $16$--homogeneous Latin bitrades of volume $16m$,
where $21 \le m \le 23$. The case $m=21$ follows from
Theorem~\ref{sum} by letting $k_1=4$,  $k_2=k_3=6$ and $p=7$. The
case $m=22$   follows from Theorem~\ref{kmxln}. 
And the following base row is for $m=23$:

$D_{23}^{16}=\{(1,13)_1,(13,2)_2,(2,14)_3,(14,3)_4,(3,15)_5,(15,4)_6,
(4,16)_7,$ \\
\hspace*{17.7mm}$(16,5)_8,(5,17)_9,(17,6)_{10},(6,1)_{11},(7,8)_{13},(8,10)_{16},
(10,11)_{19},$ \\
\hspace*{17.7mm}$(11,12)_{21},(12,7)_{23}\}$.

Similarly for $k=22,26,28,32,34$ we include the base rows in
the $\linebreak$  Appendix for odd integers ${k+5}\leq m< 3k/2$ such
that $m\ne 5{\ell}$. By Theorems~\ref{kmxln} and~\ref{5m} the
proof is complete. }\end{proof}
The results above motivates us to conjecture that:
\begin{conjecture}
\label{existenceconjecture} For all $m$ and $k$,  $m \ge k\ge
3$,  there exists a $k$--homogeneous Latin bitrade of volume $km$.
\end{conjecture}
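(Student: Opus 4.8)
The plan is to peel off everything the existing machinery already settles and isolate the one family where a new idea is unavoidable. The organising tool is Theorem~\ref{k+5}: for a fixed $k$ it is enough to build $k$-homogeneous Latin bitrades of volume $km$ for $m$ in the window $k+5\le m<3k/2$, since the offsets $m=k,\dots,k+4,k+6$ and all $m\ge 3k/2$ are already delivered by the elementary cyclic bitrade and by Theorems~\ref{k(k+1)},~\ref{even 3k/2} and Lemmas~\ref{k+3},~\ref{k+2,4} with Corollary~\ref{k+6}. First I would clear the classes of $k$ that need nothing new: odd $k$ are finished by Theorem~\ref{odd}, and every $k$ divisible by $3$ or $5$ is finished for all $m\ge k$ by Corollary~\ref{3k',5k'}. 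What survives is even $k$ whose only prime factors are $2$ together with primes $\ge 7$.

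For such $k$ that still possess an odd prime factor $p\ge 7$, write $k=2^{a}b$ with $b$ odd, $b\ge 7$, and take $u$ to be the least odd divisor $>1$. Theorem~\ref{even k+u} then covers all $m\ge k+u$, so only the short stretch $k+5\le m<k+u$ remains inside the window. Here I would lean on the product recursions Theorems~\ref{kmxln} and~\ref{sum} to reach the composite offsets and supply the few remaining values, notably $m=k+5$, by explicit circulant base rows of the modular flavour of Lemma~\ref{k+3} and the Appendix; the case $k=14$, $m=19$ in the proof of Theorem~\ref{37} is the prototype. This stage is intricate but, I believe, only bookkeeping: it extends the finite case analysis $9\le k\le 37$ to arbitrarily large $k$ sharing a fixed small odd prime factor.

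The real obstruction is the case of a power of two, $k=2^{t}$. Now $k$ has no odd divisor $u>1$, so Theorem~\ref{even k+u} is simply unavailable, and the only tool reaching past the handful of small offsets is Theorem~\ref{even 3k/2}, which starts only at $m\ge 3k/2$. The tempting shortcut is a doubling recursion: from $2^{t}=2\cdot 2^{t-1}$, Theorem~\ref{kmxln} combines a $2$-homogeneous bitrade of volume $2m_1$ (existing exactly for even $m_1$, by Theorem~\ref{2hom}) with a $2^{t-1}$-homogeneous bitrade of volume $2^{t-1}m_2$ to yield a $2^{t}$-homogeneous bitrade of volume $2^{t}(m_1m_2)$. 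But this produces only \emph{composite} parameters $m=m_1m_2$, and the window $k+5\le m<3k/2$ invariably contains primes and integers with no admissible factorisation; so recursion alone can never realise every $m$ in the range.

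Hence the crux, and the genuine content of the conjecture, is to construct directly an infinite family of circulant $2^{t}$-homogeneous Latin bitrades of volume $2^{t}m$ covering \emph{every} $m$ with $k+5\le m<3k/2$, prime values included. I would search for a single base-row template, uniform in $t$, in which the entries of $T^{*}$ arise from those of $T$ by a fixed cyclic shift applied on a periodic pattern of cells, generalising the seven residue classes of Lemma~\ref{k+3} and the tabulated base rows; the task is then to prove that such a template is always a legitimate bitrade (each row, column and symbol occurring exactly $k$ times, with $T\cap T^{*}=\emptyset$) for all powers of two at once. Producing and verifying that uniform family is exactly the step no current technique supplies, which is why at present only the finitely many powers $k=4,8,16,32$ can be confirmed (Theorems~\ref{kolli} and~\ref{37}) and the statement stands as a conjecture.
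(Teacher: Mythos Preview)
The statement you are asked to prove is not a theorem in the paper at all: it is Conjecture~\ref{existenceconjecture}, stated after Theorem~\ref{37} as the natural extrapolation of the partial results, and the paper offers no proof of it. So there is no ``paper's own proof'' to compare against, and your write-up is likewise not a proof but an honest diagnosis of where the obstruction lies. On that level your analysis is accurate and matches the paper's implicit picture: Theorem~\ref{odd} disposes of odd $k$; Corollary~\ref{3k',5k'} handles $k$ divisible by $3$ or $5$; Theorem~\ref{k+5} reduces every remaining $k$ to the window $k+5\le m<3k/2$; and inside that window the paper can only proceed, for $9\le k\le 37$, by exhibiting explicit circulant base rows (the Appendix). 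You are also right that the pure powers of two $k=2^{t}$ are the case where the available structural tools (Theorems~\ref{even k+u} and~\ref{kmxln}) give the least traction.

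One caution: your claim that the case of even $k$ with smallest odd prime divisor $u\ge 7$ is ``only bookkeeping'' is itself unproved and is really a second open sub-problem, not a routine step. For such $k$ the stretch $k+5\le m<k+u$ still has to be filled, and the paper does this only by ad hoc base rows for each individual $(k,m)$ with $k\le 37$; no uniform template is given, and the number of sporadic values to realise grows with $u$. So the conjecture is genuinely open on two fronts, not one, and your proposal should be read as a road map rather than a proof.
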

%
\section*{Appendix}
The followings are base rows of bitrades needed in the proof of Theorem~\ref{37}:
\begin{itemize}
\item{$\bf k=22$\\
 $D_{27}^{22}=\{(1,19)_{1},(3,2)_{2},(2,4)_{3},(6,3)_{4},(8,7)_{5},(4,9)_{6},(11,5)_{7},(5,12)_{8},\\
\hspace*{12.6mm}(14,17)_{9},(16,27)_{10},(7,18)_{11},(19,6)_{12},(21,10)_{13},(9,24)_{14},\\
\hspace*{12.6mm}(24,21)_{15},(10,11)_{16},(27,13)_{17},(12,15)_{22},(15,1)_{23},(13,16)_{24},\\
\hspace*{12.6mm}(18,14)_{25},(17,8)_{26}\}$

$D_{29}^{22}=\{(1,21)_{1},(3,2)_{2},(2,4)_{3},(6,3)_{4},(8,7)_{5},(4,9)_{6},(11,5)_{7},(5,12)_{8},$\\
\hspace*{12.6mm}$(14,16)_{9},(16,15)_{10},(7,17)_{11},(19,8)_{12},(21,6)_{13},(9,24)_{14},$\\
\hspace*{12.6mm}$(24,10)_{15},(10,13)_{16},(27,11)_{17},(29,27)_{18},(12,1)_{19},(13,29)_{21},$\\
\hspace*{12.6mm}$(15,14)_{24},(17,19)_{27}\}$

$D_{31}^{22}=\{(1,24)_{1},(3,2)_{2},(2,4)_{3},(6,3)_{4},(8,7)_{5},(4,9)_{6},(11,5)_{7},(5,12)_{8},$\\
\hspace*{12.6mm}$(14,16)_{9},(16,15)_{10},(7,1)_{11},(19,21)_{12},(21,19)_{13},(9,11)_{14},$\\
\hspace*{12.6mm}$(24,10)_{15},(10,27)_{16},(27,8)_{17},(29,14)_{18},(12,13)_{19},(13,29)_{21},$\\
\hspace*{12.6mm}$(15,17)_{24},(17,6)_{27}\}$ }
\item{$\bf k=26$\\
$D_{31}^{26}=\{(1,19)_{1},(3,2)_{2},(2,4)_{3},(6,3)_{4},(8,7)_{5},(4,9)_{6},(11,5)_{7},(5,12)_{8},$\\
\hspace*{12.6mm}$(14,6)_{9},(16,15)_{10},(7,17)_{11},(19,1)_{12},(21,20)_{13},(9,22)_{14},$\\
\hspace*{12.6mm}$(24,10)_{15},(10,27)_{16},(27,13)_{17},(29,11)_{18},(31,29)_{19},(12,31)_{22}$\\
\hspace*{12.6mm}$,(13,16)_{25},(15,14)_{26},(18,8)_{27},(20,18)_{28},(22,21)_{29},(17,24)_{30}\}$

$D_{33}^{26}=\{(1,22)_{1},(3,2)_{2},(2,4)_{3},(6,3)_{4},(8,7)_{5},(4,9)_{6},(11,5)_{7},(5,12)_{8},$\\
\hspace*{12.6mm}$(14,6)_{9},(16,1)_{10},(7,17)_{11},(19,20)_{12},(21,18)_{13},(9,24)_{14},$\\
\hspace*{12.6mm}$(24,10)_{15},(10,27)_{16},(27,29)_{17},(29,14)_{18},(12,11)_{19},(32,13)_{20}$\\
\hspace*{12.6mm}$,(13,15)_{21},(15,32)_{25},(18,16)_{29},(17,19)_{30},(22,21)_{31},(20,8)_{32}\}$

$D_{37}^{26}=\{(1,27)_{1},(3,2)_{2},(2,4)_{3},(6,3)_{4},(8,7)_{5},(4,9)_{6},(11,5)_{7},(5,12)_{8},$\\
\hspace*{12.6mm}$(14,6)_{9},(16,15)_{10},(7,19)_{11},(19,18)_{12},(21,24)_{13},(9,21)_{14},$\\
\hspace*{12.6mm}$(24,10)_{15},(10,29)_{16},(27,8)_{17},(29,32)_{18},(12,13)_{19},(32,16)_{20},$\\
\hspace*{12.6mm}$(13,11)_{21},(35,14)_{22},(37,35)_{23},(15,17)_{24},(17,37)_{27},(18,1)_{29}\}$
}
\item{$\bf k=28$\\
$D_{33}^{28}=\{(1,20)_{1},(3,2)_{2},(2,4)_{3},(6,3)_{4},(8,7)_{5},(4,9)_{6},(11,5)_{7},(5,12)_{8},$\\
\hspace*{12.6mm}$(14,6)_{9},(16,15)_{10},(7,17)_{11},(19,8)_{12},(21,1)_{13},(9,24)_{14},$\\
\hspace*{12.6mm}$(24,23)_{15},(10,11)_{16},(27,29)_{17},(29,27)_{18},(31,13)_{19},(33,31)_{20},$\\
\hspace*{12.6mm}$(12,14)_{21},(13,33)_{26},(15,19)_{27},(17,18)_{28},(22,16)_{29},(20,10)_{30},$\\
\hspace*{12.6mm}$(23,22)_{31},(18,21)_{32}\}$

$D_{37}^{28}=\{(1,27)_{1},(3,2)_{2},(2,4)_{3},(6,3)_{4},(8,7)_{5},(4,9)_{6},(11,5)_{7},(5,12)_{8},$\\
\hspace*{12.6mm}$(14,6)_{9},(16,15)_{10},(7,17)_{11},(19,37)_{12},(21,22)_{13},(9,21)_{14},$\\
\hspace*{12.6mm}$(24,10)_{15},(10,24)_{16},(27,29)_{17},(29,11)_{18},(12,32)_{19},(32,14)_{20}$\\
\hspace*{12.6mm}$,(13,35)_{21},(35,18)_{22},(37,13)_{23},(15,16)_{24},(17,1)_{27},(18,20)_{29}$\\
\hspace*{12.6mm}$,(20,19)_{32},(22,8)_{35}\}$

$D_{39}^{28}=\{(1,27)_{1},(3,2)_{2},(2,4)_{3},(6,3)_{4},(8,7)_{5},(4,9)_{6},(11,5)_{7},(5,12)_{8},$\\
\hspace*{12.6mm}$(14,6)_{9},(16,15)_{10},(7,17)_{11},(19,20)_{12},(21,1)_{13},(9,24)_{14},$\\
\hspace*{12.6mm}$(24,22)_{15},(10,8)_{16},(27,13)_{17},(29,11)_{18},(12,14)_{19},(32,29)_{20},$\\
\hspace*{12.6mm}$(13,32)_{21},(35,16)_{22},(37,35)_{23},(15,37)_{24},(17,18)_{27},(18,19)_{29}$\\
\hspace*{12.6mm}$,(20,21)_{32},(22,10)_{35}\}$

$D_{41}^{28}=\{(1,32)_{1},(3,2)_{2},(2,4)_{3},(6,3)_{4},(8,7)_{5},(4,9)_{6},(11,5)_{7},(5,12)_{8},$\\
\hspace*{12.6mm}$(14,6)_{9},(16,15)_{10},(7,17)_{11},(19,21)_{12},(21,20)_{13},(9,29)_{14},$\\
\hspace*{12.6mm}$(24,27)_{15},(10,24)_{16},(27,11)_{17},(29,13)_{18},(12,8)_{19},(32,16)_{20},$\\
\hspace*{12.6mm}$(13,35)_{21},(35,10)_{22},(37,14)_{23},(15,37)_{24},(40,18)_{25},(17,19)_{27}$\\
\hspace*{12.6mm}$,(18,40)_{29},(20,1)_{32}\}$ }
\item{$\bf k=32$\\

$D_{37}^{32}=\{(1,22)_{1},(3,2)_{2},(2,4)_{3},(6,3)_{4},(8,7)_{5},(4,9)_{6},(11,5)_{7},(5,12)_{8},$\\
\hspace*{12.6mm}$(14,6)_{9},(16,15)_{10},(7,17)_{11},(19,8)_{12},(21,20)_{13},(9,1)_{14},$\\
\hspace*{12.6mm}$(24,10)_{15},(10,25)_{16},(27,29)_{17},(29,26)_{18},(12,13)_{19},(32,35)_{20},$\\
\hspace*{12.6mm}$(35,32)_{21},(37,36)_{22},(36,16)_{23},(13,37)_{27},(17,14)_{29},(15,18)_{30},$\\
\hspace*{12.6mm}$(18,23)_{31},(22,21)_{32},(25,19)_{33},(23,24)_{34},(26,11)_{35},(20,27)_{36}\}$\\

$D_{39}^{32}=\{(1,24)_{1},(3,2)_{2},(2,4)_{3},(6,3)_{4},(8,7)_{5},(4,9)_{6},(11,5)_{7},(5,12)_{8},$\\
\hspace*{12.6mm}$(14,6)_{9},(16,15)_{10},(7,17)_{11},(19,8)_{12},(21,20)_{13},(9,22)_{14},$\\
\hspace*{12.6mm}$(24,39)_{15},(10,25)_{16},(27,11)_{17},(29,32)_{18},(12,29)_{19},(32,13)_{20},$\\
\hspace*{12.6mm}$(13,16)_{21},(35,14)_{22},(37,35)_{23},(39,37)_{24},(15,1)_{25},(17,18)_{28},$\\
\hspace*{12.6mm}$(18,19)_{33},(20,21)_{34},(26,23)_{35},(23,27)_{36},(25,26)_{37},(22,10)_{38}\}$\\

$D_{41}^{32}=\{(1,26)_{1},(3,2)_{2},(2,4)_{3},(6,3)_{4},(8,7)_{5},(4,9)_{6},(11,5)_{7},(5,12)_{8},$\\
\hspace*{12.6mm}$(14,6)_{9},(16,15)_{10},(7,17)_{11},(19,8)_{12},(21,20)_{13},(9,22)_{14},$\\
\hspace*{12.6mm}$(24,1)_{15},(10,25)_{16},(27,11)_{17},(29,32)_{18},(12,29)_{19},(32,10)_{20},$\\
\hspace*{12.6mm}$(13,16)_{21},(35,13)_{22},(37,35)_{23},(15,37)_{24},(40,18)_{25},(17,19)_{27},$\\
\hspace*{12.6mm}$(18,40)_{29},(20,21)_{32},(22,24)_{37},(25,23)_{38},(23,27)_{39},(26,14)_{40}\}$\\

$D_{43}^{32}=\{(1,29)_{1},(3,2)_{2},(2,4)_{3},(6,3)_{4},(8,7)_{5},(4,9)_{6},(11,5)_{7},(5,12)_{8},$\\
\hspace*{12.6mm}$(14,6)_{9},(16,15)_{10},(7,17)_{11},(19,8)_{12},(21,1)_{13},(9,23)_{14},$\\
\hspace*{12.6mm}$(24,22)_{15},(10,27)_{16},(27,25)_{17},(29,13)_{18},(12,32)_{19},(32,14)_{20},$\\
\hspace*{12.6mm}$(13,10)_{21},(35,37)_{22},(37,35)_{23},(15,40)_{24},(40,16)_{25},(42,18)_{26},$\\
\hspace*{12.6mm}$(17,20)_{27},(18,19)_{29},(20,42)_{32},(22,21)_{35},(23,24)_{37},(25,11)_{40}\}$\\

$D_{47}^{32}=\{(1,35)_{1},(3,2)_{2},(2,4)_{3},(6,3)_{4},(8,7)_{5},(4,9)_{6},(11,5)_{7},(5,12)_{8},$\\
\hspace*{12.6mm}$(14,6)_{9},(16,15)_{10},(7,17)_{11},(19,8)_{12},(21,20)_{13},(9,24)_{14},$\\
\hspace*{12.6mm}$(24,23)_{15},(10,11)_{16},(27,29)_{17},(29,27)_{18},(12,32)_{19},(32,13)_{20},$\\
\hspace*{12.6mm}$(13,10)_{21},(35,37)_{22},(37,40)_{23},(15,16)_{24},(40,19)_{25},(42,14)_{26},$\\
\hspace*{12.6mm}$(17,18)_{27},(45,42)_{28},(18,45)_{29},(20,22)_{32},(22,21)_{35},(23,1)_{37}\}$
}
\item{$\bf k=34$\\

$D_{39}^{34}=\{(1,24)_{1},(3,2)_{2},(2,4)_{3},(6,3)_{4},(8,7)_{5},(4,9)_{6},(11,5)_{7},(5,12)_{8},$\\
\hspace*{12.6mm}$(14,6)_{9},(16,15)_{10},(7,17)_{11},(19,8)_{12},(21,20)_{13},(9,22)_{14},$\\
\hspace*{12.6mm}$(24,10)_{15},(10,25)_{16},(27,39)_{17},(29,28)_{18},(12,13)_{19},(32,11)_{20},$\\
\hspace*{12.6mm}$(34,32)_{21},(37,34)_{22},(39,38)_{23},(38,37)_{24},(13,1)_{26},(15,16)_{30},$\\
\hspace*{12.6mm}$(17,21)_{31},(20,19)_{32},(23,18)_{33},(25,23)_{34},(18,27)_{35},(28,29)_{36},$\\
\hspace*{12.6mm}$(26,14)_{37},(22,26)_{38}\}$  \\

$D_{41}^{34}=\{(1,25)_{1},(3,2)_{2},(2,4)_{3},(6,3)_{4},(8,7)_{5},(4,9)_{6},(11,5)_{7},(5,12)_{8},$\\
\hspace*{12.6mm}$(14,6)_{9},(16,15)_{10},(7,17)_{11},(19,8)_{12},(21,20)_{13},(9,22)_{14},$\\
\hspace*{12.6mm}$(24,10)_{15},(10,41)_{16},(27,26)_{17},(29,28)_{18},(12,13)_{19},(32,35)_{20},$\\
\hspace*{12.6mm}$(13,32)_{21},(35,14)_{22},(37,16)_{23},(39,37)_{24},(41,39)_{25},(15,1)_{26},$\\
\hspace*{12.6mm}$(17,18)_{27},(18,19)_{33},(22,23)_{35},(20,21)_{36},(28,24)_{37},(26,27)_{38},$\\
\hspace*{12.6mm}$(25,29)_{39},(23,11)_{40}\}$   \\

$D_{43}^{34}=\{(1,27)_{1},(3,2)_{2},(2,4)_{3},(6,3)_{4},(8,7)_{5},(4,9)_{6},(11,5)_{7},(5,12)_{8},$\\
\hspace*{12.6mm}$(14,6)_{9},(16,15)_{10},(7,17)_{11},(19,8)_{12},(21,20)_{13},(9,22)_{14},$\\
\hspace*{12.6mm}$(24,42)_{15},(10,1)_{16},(27,26)_{17},(29,28)_{18},(12,14)_{19},(32,35)_{20},$\\
\hspace*{12.6mm}$(13,32)_{21},(35,10)_{22},(37,16)_{23},(15,40)_{24},(40,37)_{25},(42,18)_{26},$\\
\hspace*{12.6mm}$(17,21)_{27},(18,19)_{29},(20,23)_{32},(22,24)_{37},(23,25)_{39},(26,29)_{40},$\\
\hspace*{12.6mm}$(28,11)_{41},(25,13)_{42}\}$    \\

$D_{47}^{34}=\{(1,32)_{1},(3,2)_{2},(2,4)_{3},(6,3)_{4},(8,7)_{5},(4,9)_{6},(11,5)_{7},(5,12)_{8},$\\
\hspace*{12.6mm}$(14,6)_{9},(16,15)_{10},(7,17)_{11},(19,8)_{12},(21,20)_{13},(9,22)_{14},$\\
\hspace*{12.6mm}$(24,25)_{15},(10,1)_{16},(27,29)_{17},(29,27)_{18},(12,13)_{19},(32,37)_{20},$\\
\hspace*{12.6mm}$(13,10)_{21},(35,14)_{22},(37,18)_{23},(15,35)_{24},(40,16)_{25},(42,40)_{26},$\\
\hspace*{12.6mm}$(17,42)_{27},(45,21)_{28},(18,19)_{29},(20,45)_{32},(22,23)_{35},(23,24)_{37},$\\
\hspace*{12.6mm}$(25,26)_{40},(26,11)_{42}\}$    \\

$D_{49}^{34}=\{(1,35)_{1},(3,2)_{2},(2,4)_{3},(6,3)_{4},(8,7)_{5},(4,9)_{6},(11,5)_{7},(5,12)_{8},$\\
\hspace*{12.6mm}$(14,6)_{9},(16,15)_{10},(7,17)_{11},(19,8)_{12},(21,20)_{13},(9,27)_{14},$\\
\hspace*{12.6mm}$(24,23)_{15},(10,25)_{16},(27,29)_{17},(29,13)_{18},(12,37)_{19},(32,10)_{20},$\\
\hspace*{12.6mm}$(13,32)_{21},(35,14)_{22},(37,11)_{23},(15,18)_{24},(40,42)_{25},(42,40)_{26},$\\
\hspace*{12.6mm}$(17,16)_{27},(45,19)_{28},(18,22)_{29},(48,45)_{30},(20,48)_{32},(22,21)_{35},$\\
\hspace*{12.6mm}$(23,24)_{37},(25,1)_{40}\}$ }
\end{itemize}
\newpage

\noindent {\bf Acknowledgement.} This research was in part
supported by a grant (\#86050213) from the
Department of Mathematics of
 Institute for Studies in Theoretical Physics and Mathematics (IPM)
  P. O. Box 19395-5746, Tehran, I. R. Iran. We also appreciate
 the help of Amir Hooshang Hosseinpoor for his computer
 programming.
%


\begin{thebibliography}{1}

\bibitem{anderson90a}
Ian Anderson.
\newblock {\em Combinatorial designs: Construction Methods}.
\newblock John Wiley \& Sons, Inc., New York, 1990.

\bibitem{MR2220235}
Richard Bean, Hoda Bidkhori, Maryam Khosravi, and E.~S. Mahmoodian.
\newblock {$k$}-homogeneous {L}atin trades.
\newblock {\em Bayreuth. Math. Schr.}, 74:7--18, 2005.

\bibitem{MR2041871}
Elizabeth~J. Billington.
\newblock Combinatorial trades: a survey of recent results.
\newblock In {\em Designs, 2002}, volume 563 of {\em Math. Appl.}, pages
  47--67. Kluwer Acad. Publ., Boston, MA, 2003.

\bibitem{MR2170114}
Nicholas Cavenagh, Diane Donovan, and Ale{\v{s}} Dr{\'a}pal.
\newblock 3-homogeneous {L}atin trades.
\newblock {\em Discrete Math.}, 300(1-3):57--70, 2005.

\bibitem{MR2139816}
Nicholas Cavenagh, Diane Donovan, and Ale{\v{s}} Dr{\'a}pal.
\newblock 4-homogeneous {L}atin trades.
\newblock {\em Australas. J. Combin.}, 32:285--303, 2005.

\bibitem{CavenaghMathSlovac}
Nicholas~J. Cavenagh.
\newblock The theory and application of {L}atin bitrades: a survey.
\newblock {\em Math. Slovac.}, to appear.

\bibitem{MR2048415}
A.~D. Keedwell.
\newblock Critical sets in {L}atin squares and related matters: an update.
\newblock {\em Util. Math.}, 65:97--131, 2004.

\end{thebibliography}
%
\def\cprime{$'$}

\end{document}